\newcommand{\TITLE}{Frobenius Finds Non-monogenic Division Fields of Abelian Varieties}
\newcommand{\TITLERUNNING}{}
\theoremstyle{plain}
\newtheorem{theorem}{Theorem}
\newtheorem{lemma}[theorem]{Lemma}
\newtheorem{corollary}[theorem]{Corollary}
\theoremstyle{definition}
\theoremstyle{remark}
\newtheorem{remark}[theorem]{Remark}
\newtheorem{example}[theorem]{Example}
\newtheorem{goal}[theorem]{Goal}
\numberwithin{theorem}{section}
\newcommand{\tightoverset}[2]{%
  \mathop{#2}\limits^{\vbox to -.5ex{\kern-1.05ex\hbox{$#1$}\vss}}}
\newcommand{\gp}{{\mathfrak{p}}}
\def\Bcal{{\mathcal B}}
\def\Ocal{{\mathcal O}}
\newcommand{\CC}{\mathbb{C}}
\newcommand{\FF}{\mathbb{F}}
\newcommand{\GG}{\mathbb{G}}
\newcommand{\QQ}{\mathbb{Q}}
\newcommand{\RR}{\mathbb{R}}
\newcommand{\ZZ}{\mathbb{Z}}
\newcommand{\tensor}{\otimes}
\newcommand{\ol}[1]{\overline{#1}}
\newcommand{\vphi}{\varphi}
\newcommand{\abs}[1]{\lvert#1\rvert}
\newcommand{\Gal}{\operatorname{Gal}}
\newcommand{\End}{\operatorname{End}}
\newcommand{\irred}{\operatorname{irred}}
\newcommand{\ord}{\operatorname{ord}}
\newcommand{\GSp}{\operatorname{GSp}}
\newcommand{\Sp}{\operatorname{Sp}}
\title[\TITLERUNNING]{\vspace*{-1.3cm} \TITLE}
\date{\today}
\author[Hanson Smith]{Hanson Smith}
\address{Department of Mathematics, University of Connecticut,
341 Mansfield Road U1009
Storrs, CT 06269-1009
USA}
\email{hanson.smith@uconn.edu}
\keywords{}
\subjclass[2020]{Primary 11G10, Secondary 11R04}
\begin{document}
\sloppy

\begin{abstract}
Let $A$ be an abelian variety over a finite field $k$ with $|k|=q=p^m$. Let $\pi\in \text{End}_k(A)$ denote the Frobenius and let $v=\frac{q}{\pi}$ denote Verschiebung. Suppose the Weil $q$-polynomial of $A$ is irreducible. When $\text{End}_k(A)=\mathbb{Z}[\pi,v]$, we construct a matrix which describes the action of $\pi$ on the prime-to-$p$-torsion points of $A$.
We employ this matrix in an algorithm that detects when $p$ is an obstruction to the monogeneity of division fields of certain abelian varieties.
\end{abstract}

\maketitle

\section{Introduction}\label{Intro}

One motivation of arithmetic geometry is the hope to connect properties of number rings with properties of various geometric objects. Cyclotomic fields (the division fields of $\GG_m$) supply foundational examples. One particularly nice property is that a primitive $n$-torsion point of $\GG_m$ generates the ring of integers of $\QQ(\zeta_n)$. Symbolically, $\Ocal_{\QQ(\zeta_n)}=\ZZ[\zeta_n]$. Fields with rings of integers that enjoy a power integral basis are called \textit{monogenic}. One wonders whether monogeneity\footnote{The more correct noun is probably ``monogenicity,'' but ``monogeneity'' is more common in the literature.} occurs for division fields of other algebraic groups. A natural next example to consider is the $n$-division fields of an elliptic curve. This is the subject of \cite{NonMonoECs}, where the author shows that there are many division fields of elliptic curves over $\QQ$ that are not monogenic. 

The investigation at hand is a generalization 
to abelian varieties of dimension greater than one. As is often the case, we encounter phenomena that are not present in dimension one. In spite of this we are able to generalize the main result of \cite{NonMonoECs} to abelian varieties with irreducible Weil polynomials and minimal endomorphism rings (Theorem \ref{Thm: NonMonoMainThm}). 
In the process we construct a matrix which yields the action of a lift of the Frobenius at $p$ on the prime-to-$p$-torsion points of the relevant abelian variety (Theorem \ref{Thm: FrobMatrix}). We believe this will be of independent interest and utility.

\tableofcontents

We begin with a very abbreviated list of some related research for elliptic curves.
As mentioned above, \cite{NonMonoECs} is the dimension one predecessor to this work. In \cite{abdiv}, Gonz\'alez-Jim\'enez and Lozano-Robledo classify the possible abelian division fields. In the process, they show that for $n=1,2,3,4$, and 5 one can have $\QQ(E[n])=\QQ(\zeta_n)$. Hence, in these cases, $\QQ(E[n])$ is monogenic. The monogeneity of a family of fields obtained by adjoining the $x$-coordinate of a 3-torsion point is studied in \cite{GassertSmithStange}. Adelmann \cite{Adelmann} has an in-depth investigation of the splitting of primes in division fields. Cassou-Nogu\`es and Taylor have studied the monogeneity of division fields of CM elliptic curves extensively; see \cite{C-NT}, \cite{C-NTModUnits}, and \cite{C-NTMono}.

For abelian varieties, literature in the exact same vein as our work is a bit more sparse. Division fields are studied in relation to the uniform boundedness conjecture:  Two early papers in this program for CM abelian varieties are \cite{DivFieldsCM} and \cite{SilverbergTorsionAVs}. Literature related to and generalizing this work in certain cases includes \cite{ClarkXarles}, \cite{TypicallyBounding}, and \cite{TorsionofAVswithCM}. See \cite{UniformAVs} for an overview of this area. A recent paper by Bakker and Tsimerman \cite{BakkerTsimermanGeo} considers the problem geometrically in the case of real multiplication. See also \cite{UniformBoundAbelianSchemes}, \cite{TorsionNote}, and \cite{OnTorsioninAVs}.

Understanding the points and group structure of abelian varieties over finite fields has motivated a bevy of current research; see the recent papers \cite{MarsegliaSpringer}, \cite{Kedlaya2021}, \cite{vBCLPS}. For some surveys see \cite{JeffRachelSurvey} and \cite{PriesSurvey}.

Algorithmic computation receives consistent attention. A very recent paper \cite{Marseglia} develops algorithms to compute isomorphism classes of ordinary abelian varieties. The reader interested in computation of endomorphism rings can delve into \cite{EisentragerLauter}, \cite{Bisson}, and the surrounding literature. 

The arithmetic properties of division fields of abelian varieties are studied in a multitude of other contexts. A few relevant papers are \cite{SmallDivFields}, \cite{ArithmeticofDivisionFields}, \cite{GrowthDivFields}, \cite{ClassNumbersDivFields}, and \cite{FrobEigenvalues}. 

\section*{Acknowledgments}
The author would like to thank Jeff Achter for the helpful conversations and for the computation of $|\GSp_{2g}(\ZZ/n\ZZ)|$ in Appendix \ref{OrdGSp}. The author would also like to thank Jacob Moorman for the help improving the code used in the computations. Magma \cite{Magma}, SageMath \cite{Sage}, and the \href{https://www.lmfdb.org/}{LMFDB} \cite{LMFDB} were all immensely useful for this project.

\section{Endomorphism Rings of Abelian Varieties}\label{EndofAVs} 



Let $A$ be a simple abelian variety of dimension $g$ over a finite field $k$ of characteristic $p$, where $|k|=q=p^m$. Let $\pi$ denote the Frobenius endomorphism of $A$ with respect to $k$, i.e. $x\in A(k)\mapsto x^{q}\in A(k)$, and let $v=\frac{q}{\pi}$ denote Verschiebung. Denote by $f(x)$ the characteristic polynomial of $\pi$. Over $\QQ[x]$ one has $f(x)=m(x)^e$ for some irreducible $m(x)\in \ZZ[x]$ and some $e>0$, where $e\deg(f)=2g$. We will be concerned with the case where $e=1$, so $f(x)$ is irreducible of degree $2g$. 
The endomorphism ring of $A$ over $k$ is $\End_k(A)$ and we write $E=\End_k(A)\tensor\QQ$ for the endomorphism algebra of $A$. We identify $\pi$ and $v$ with algebraic integers which we abusively also denote $\pi$ and $v$. In general, $E$ is a division algebra whose center is $\QQ(\pi)$ and $[E:\QQ]=e^2[\QQ(\pi):\QQ]$. When $e=1$, then $E$ is a totally imaginary number field of degree $2g$ and $\End_k(A)$ is identified with an order in $E$ containing $\ZZ[\pi,v]$. We will be most interested in the case where $\End_k(A)=\ZZ[\pi,v]$ because it is possible to generically compute a $\ZZ$-basis for this order. 

Changing focus slightly, a \emph{Weil $q$-number} is an algebraic integer $\pi$ such that $|\vphi(\pi)|=\sqrt{q}$ for every embedding of $\vphi: \QQ(\pi)\hookrightarrow \CC$. A \emph{Weil $q$-polynomial} is a monic, even degree polynomial $f(x)\in \ZZ[x]$ such that all the roots of $f(x)$ in $\CC$ have magnitude $\sqrt{q}$. 
If $f(x)$ is the characteristic polynomial of the Frobenius endomorphism of a simple abelian variety $A$, then $f(x)$ is a Weil $q$-polynomial of degree $2g$. 
When $f(x)$ is irreducible, then $\QQ(\pi)$ is isomorphic to $E=\End(A)\tensor \QQ$. Here $E$ is a degree $2g$ CM-field: a totally imaginary number field with a totally real subfield of index 2. As mentioned above, the minimal possible endomorphism ring of $A$ is $\ZZ[\pi,v]$.

The following is part of \cite[Theorem 6.1]{Waterhouse} and helps focus our investigation.
\begin{theorem}\label{6.1}
Suppose $A$ is a elementary (simple) abelian variety defined over the prime field $\FF_p$ and that $\QQ(\pi)$ has no real embeddings. Then
$E=\QQ(\pi)$, so the Weil $p$-polynomial $f(x)$ is irreducible. Further, all orders in $E$ containing $\pi$ and $v$ are endomorphism rings of abelian varieties over $\FF_p$.
\end{theorem}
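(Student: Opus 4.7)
The plan is to first deduce the commutativity of $E$ from the classical Honda-Tate invariant formula, and then construct abelian varieties realizing each admissible order by an isogeny/lattice argument.

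For the first assertion, recall that when $A/\FF_q$ is a simple abelian variety, the endomorphism algebra $E$ is a central division algebra over $L := \QQ(\pi)$, and its class in $\operatorname{Br}(L)$ is specified by local invariants: $0$ at complex archimedean places, $\tfrac{1}{2}$ at real archimedean places, $0$ at finite places not over $p$, and $\tfrac{v_w(\pi)}{v_w(q)}[L_w:\QQ_p] \pmod 1$ at places $w$ over $p$. Under our hypotheses we have $q=p$ and $L$ totally imaginary, so the archimedean contributions vanish; at a place $w \mid p$ one has $v_w(q)=e_w$ and $[L_w:\QQ_p]=e_w f_w$, so the invariant becomes $v_w(\pi)f_w$, which is an integer because $\pi\in\Ocal_L$. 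All invariants being trivial forces $E=L=\QQ(\pi)$, and since the characteristic polynomial $f$ is the $[E:L]^{1/2}=1$ power of the minimal polynomial of $\pi$, we conclude $f$ is irreducible of degree $2g$.

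For the second assertion, I would fix any simple abelian variety $A_0/\FF_p$ with Frobenius $\pi$ (which exists by Honda-Tate), so that $\Ocal_0 := \End_{\FF_p}(A_0)$ is some order with $\ZZ[\pi,v]\subseteq \Ocal_0 \subseteq \Ocal_L$. Given a target order $\Ocal$ with $\ZZ[\pi,v]\subseteq \Ocal \subseteq \Ocal_L$, the strategy is to produce an $\FF_p$-isogeny $\varphi:A_0\to A$ with $\End_{\FF_p}(A)=\Ocal$. By Tate's theorem, for each $\ell\neq p$ the rational Tate module $V_\ell(A_0)$ is free of rank one over $L_\ell:=L\otimes\QQ_\ell$, and the possible endomorphism rings in the isogeny class of $A_0$ are exactly those whose $\ell$-adic completions stabilize a chosen $\ZZ_\ell$-lattice $T_\ell \subset V_\ell(A_0)$. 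One therefore replaces $T_\ell(A_0)$ with the $\Ocal_\ell$-lattice it generates; at $p$ one performs the analogous modification on the covariant Dieudonn\'e module $M(A_0)$ as a module over $W(\FF_p)[F,V]$, with $F=\pi$ and $V=v$.

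The delicate step is the analysis at $p$: one must verify that every order $\Ocal_p$ with $\ZZ_p[\pi,v]\subseteq \Ocal_p \subseteq \Ocal_{L,p}$ really stabilizes some Dieudonn\'e lattice inside $M(A_0)[1/p]$. The hypothesis $q=p$ makes the local structure rigid — at each place of $L$ over $p$, the identity $\pi v=p$ together with the valuations $v_w(\pi)$ and $v_w(v)$ forces any lattice modification to respect the slope filtration — and in the ordinary case Deligne's equivalence between ordinary abelian varieties over $\FF_p$ and finitely generated $\ZZ$-modules equipped with a Frobenius-like operator gives a clean dictionary between such lattices and endomorphism orders. Assembling the $\ell$-adic and $p$-adic modifications into a single $\FF_p$-isogeny $A_0\to A$ via strong approximation in $E^\times$ then produces $A$ with $\End_{\FF_p}(A)=\Ocal$, completing the proof.
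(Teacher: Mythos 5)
The paper itself offers no proof of this statement; it is quoted from Waterhouse, so what you are really doing is reconstructing his argument. Your first assertion is handled correctly and by exactly the standard computation: the local invariants of the division algebra $E$ over $L=\QQ(\pi)$ vanish at complex places and at finite places away from $p$, and at a place $w\mid p$ the hypothesis $q=p$ gives $\mathrm{inv}_w(E)=\tfrac{v_w(\pi)}{v_w(p)}[L_w:\QQ_p]=v_w(\pi)f_w\in\ZZ$; with no real places the class is trivial, $E=L$, $e=1$, and $f$ is irreducible. No complaints there.

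The second assertion is where there is a genuine gap. The skeleton (modify lattices at each $\ell\neq p$ and the Dieudonn\'e module at $p$, then realize the modification by an isogeny) is the right one, but the step you yourself flag as delicate is never actually carried out, and the tools you invoke do not close it: Deligne's equivalence applies only to \emph{ordinary} abelian varieties, whereas the theorem covers every simple class over $\FF_p$ with $\QQ(\pi)$ totally imaginary, including $p$-rank $0$ classes; and ``respecting the slope filtration'' is not the obstruction. The missing idea --- and the entire reason the hypothesis $k=\FF_p$ appears --- is that $W(\FF_p)=\ZZ_p$ carries the trivial Frobenius, so $F$ and $V$ act \emph{linearly} rather than $\sigma$-semilinearly on the Dieudonn\'e module $M(A_0)$. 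Consequently $M(A_0)\otimes\QQ_p$ is a module over the commutative ring $L\otimes\QQ_p$, and by the $p$-adic analogue of Tate's theorem together with $E=L$ it is free of rank one; the same rank-one freeness of $V_\ell(A_0)$ over $L\otimes\QQ_\ell$ is what you need (and should justify, via Tate) at $\ell\neq p$. Granting this, for any order $\Ocal$ with $\ZZ[\pi,v]\subseteq\Ocal\subseteq\Ocal_L$ the lattice $\Ocal_w\cdot x$ is stable under $\Ocal_w$ and automatically under $F=\pi$ and $V=v$, and its stabilizer in $L_w$ is exactly $\Ocal_w$ (since $1\in\Ocal_w$ and $\Ocal_w$ is a ring), which is what forces $\End_{\FF_p}(A)=\Ocal$ on the nose rather than something larger. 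Finally, strong approximation in $E^\times$ is neither needed nor clearly applicable: because $1\in\Ocal$, the modified local lattices contain the original ones and coincide with them for almost all $\ell$, so they cut out a finite subgroup scheme $K\subset A_0$ and one simply takes $A=A_0/K$.
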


Honda-Tate theory (\cite{Honda,HondaTate}) tells us that $k$-isogeny classes of simple abelian varieties over $k$ are in bijection with $\Gal(\overline{\QQ}/\QQ)$-conjugacy classes of Weil $q$-numbers. Combining this with Theorem \ref{6.1}, we see that by considering irreducible Weil $p$-polynomials, we are investigating the set of isogeny classes of simple abelian varieties over $\FF_p$ with only the possible exception of the class corresponding to $f(x)=(x^2-p)^g$. (Depending on $g$, this class can be either simple or non-simple.) 
It is clear that the class of Weil $p$-polynomials we are considering corresponds to abelian varieties of all possible $p$-ranks with the potential exception of $p$-rank 0. When the $g\geq 3$, there are non-supersingular abelian varieties with $p$-rank $0$; \cite{LenstraOort} shows each of these non-supersingular Newton polygons occurs for a simple abelian variety over $\FF_p$. When $g=2$ \cite[Corollary 2.11]{MaisnerNart} shows the polynomial $x^4+px^2+p^2$, for example, yields a $p$-rank 0 isogeny class of simple abelian surfaces.

The following provides us with a more concrete destination:
\begin{goal} Given an abelian variety $\hat{A}$ over $\QQ$ and a prime $p$ for which $\hat{A}$ has good reduction, let $A$ denote the reduction of $\hat{A}$ at $p$. Suppose $\End(A)=\ZZ[\pi,v]$.\footnote{It may seem our goal is not lofty enough; however, Brian Conrad remarks in \href{http://virtualmath1.stanford.edu/~conrad/mordellsem/Notes/L02.pdf}{his notes}, ``The description of endomorphism rings is a hopeless mess, on par with describing all orders in the ring of integers of a number field."} We would like to construct a matrix that describes the splitting of $p$ in any division field $\QQ(\hat{A}[n])$ with $\gcd(n,p)=1$. As a further application, we would like to have an algorithm to decide whether $p$ divides the index of every monogenic order $\ZZ[\alpha]$ in the maximal order $\Ocal_{\QQ(\hat{A}[n])}$.
\end{goal}
We have restricted our goal to $\QQ$ (and hence $\FF_p$) merely for simplicity and ease of exposition. The algorithm presented will work for arbitrary number fields after making the obvious changes.

We quickly recall a definition and a couple of results from algebraic number theory that we will need. Call a prime $p$ that divides the index of every monogenic order in the maximal order a \textit{common index divisor}. Hensel \cite{Hensel1894} shows these primes are precisely the local obstructions to monogeneity: 
\begin{theorem}\label{commonindexdivisors}
Let $K$ be a number field. The prime $p$ is a common index divisor for $K/\QQ$ if and only if there is an integer $f$ such that the number of prime ideal factors of $p\Ocal_K$ with inertia degree $f$ is greater than the number of monic irreducible polynomials of degree $f$ in $\FF_p[x]$.
\end{theorem}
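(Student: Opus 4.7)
The plan is to use the Kummer--Dedekind theorem as the key tool: for $\alpha \in \Ocal_K$ with $K = \QQ(\alpha)$, one has $p \nmid [\Ocal_K : \ZZ[\alpha]]$ precisely when $\FF_p[\alpha] = \Ocal_K/p\Ocal_K$, in which case the factorization $m_\alpha(x) \equiv \prod_i \bar{g}_i(x)^{e_i} \pmod p$ into powers of distinct monic irreducibles mirrors the factorization $p\Ocal_K = \prod_i \gp_i^{e_i}$, with $\deg \bar{g}_i = f(\gp_i/p)$.

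For the easier direction, I would argue by contrapositive: if some $\alpha \in \Ocal_K$ has $p \nmid [\Ocal_K : \ZZ[\alpha]]$, then Kummer--Dedekind puts the primes over $p$ of a given inertia degree $f$ in bijection with a set of \emph{distinct} monic irreducible polynomials of degree $f$ in $\FF_p[x]$, so the inequality in the theorem must fail for every $f$.

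The harder direction is to construct such an $\alpha$ given that the inequality fails for every $f$. For each prime $\gp_i$ above $p$ with inertia degree $f_i$, I would choose distinct monic irreducible $\bar{g}_i \in \FF_p[x]$ of degree $f_i$ (possible by hypothesis). Cohen's structure theorem, applied after lifting the residue field via Witt vectors, identifies $\hat{\Ocal}_{\gp_i}/p \cong \FF_{p^{f_i}}[\pi_i]/(\pi_i^{e_i})$ for a uniformizer $\pi_i$. Setting $\alpha_i = \zeta_i + \pi_i$ with $\zeta_i \in \FF_{p^{f_i}}$ a root of $\bar{g}_i$, a short Taylor expansion gives $\bar{g}_i(\alpha_i) = \pi_i \cdot u_i$ for a unit $u_i$ (using that $\bar{g}_i$ is separable over the perfect field $\FF_p$), so the minimal polynomial of $\alpha_i$ over $\FF_p$ is precisely $\bar{g}_i(x)^{e_i}$. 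Assembling the $\alpha_i$ via the Chinese Remainder Theorem yields $\alpha \in \Ocal_K$ whose reduction modulo $p$ has minimal polynomial $\prod_i \bar{g}_i(x)^{e_i}$---the distinctness of the $\bar{g}_i$ is what makes the product ring monogenic---so $\FF_p[\alpha] = \Ocal_K/p\Ocal_K$ by dimension count and $p \nmid [\Ocal_K : \ZZ[\alpha]]$. A generic tweak of $\alpha$ by a $p$-multiple ensures $\QQ(\alpha) = K$ without affecting its reduction.

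The hard part will be the local monogeneity statement underlying the harder direction: each factor $\Ocal_K/\gp_i^{e_i}$ must be monogenic over $\FF_p$, and moreover must be realizable using \emph{any} prescribed irreducible polynomial of degree $f_i$. Tame ramification makes this transparent, but the wildly ramified case forces reliance on the uniform local description $\hat{\Ocal}_{\gp_i}/p \cong \FF_{p^{f_i}}[\pi_i]/(\pi_i^{e_i})$ supplied by Cohen's structure theorem; without this uniformity, the combinatorial pigeonhole argument in the theorem statement would be insufficient.
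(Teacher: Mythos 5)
Your argument is correct and complete in outline: the paper gives no proof of this statement, citing it as a classical theorem of Hensel, and what you describe is the standard proof --- Dedekind's factorization criterion for the easy direction, and for the converse the local isomorphism $\Ocal_{K_{\gp_i}}/p\cong\FF_{p^{f_i}}[\pi_i]/(\pi_i^{e_i})$ together with the generator $\zeta_i+\pi_i$ realizing any prescribed residual irreducible $\bar g_i$, glued by CRT. The only remark worth making is that the final ``generic tweak'' is unnecessary: since the minimal polynomial of $\bar\alpha$ over $\FF_p$ already has degree $\sum_i e_if_i=[K:\QQ]$, any lift $\alpha\in\Ocal_K$ automatically satisfies $\QQ(\alpha)=K$.
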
 
A classic result of Gauss computes the number of irreducible polynomials of a given degree in $\FF_p[x]$:
\begin{lemma}\label{Thm: IrredPolys}
Let $\mu$ be the M\"obius function. The number of irreducible polynomials of degree $m$ in $\FF_p[x]$ is given by
\[\irred(m,p):=\dfrac{1}{m}\sum_{d\mid m}p^d\mu\left(\dfrac{m}{d}\right).\]
\end{lemma}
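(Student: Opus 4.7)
The plan is to use the classical two-step argument: first derive the identity $p^m = \sum_{d\mid m} d\cdot \irred(d,p)$, then apply M\"obius inversion.

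For the first step, I would recall that $\FF_{p^m}$ is the splitting field of $x^{p^m}-x$ over $\FF_p$, and that an element $\alpha \in \overline{\FF_p}$ lies in $\FF_{p^m}$ if and only if the degree of its minimal polynomial over $\FF_p$ divides $m$. Since $x^{p^m}-x$ is separable (its derivative is $-1$), every monic irreducible $g(x)\in \FF_p[x]$ of degree $d\mid m$ divides $x^{p^m}-x$ exactly once, and conversely every irreducible factor has degree dividing $m$. This gives the factorization
\[
x^{p^m}-x \;=\; \prod_{d\mid m}\ \prod_{\substack{g \text{ monic irred.}\\ \deg g = d}} g(x).
\]
Comparing degrees on both sides yields the identity
\[
p^m \;=\; \sum_{d\mid m} d\cdot \irred(d,p).
\]

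For the second step, I would apply M\"obius inversion to the arithmetic function $F(m):=m\cdot\irred(m,p)$, whose summatory function over divisors of $m$ is $G(m):=p^m$. M\"obius inversion then gives
\[
m\cdot\irred(m,p) \;=\; \sum_{d\mid m} \mu(m/d)\,p^d,
\]
and dividing through by $m$ yields the claimed formula. The main obstacle is essentially nonexistent here, as the argument is entirely standard; the only point requiring a little care is confirming that each monic irreducible of degree $d\mid m$ appears with multiplicity exactly one in the factorization of $x^{p^m}-x$, which follows from separability. Since this lemma is stated (and credited to Gauss) purely as a tool that will be invoked via Theorem \ref{commonindexdivisors} to compare prime-factorization patterns against the count of available residue-field polynomials, a brief appeal to these standard facts suffices.
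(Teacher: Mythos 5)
Your proof is correct and complete: the factorization of $x^{p^m}-x$ into all monic irreducibles of degree dividing $m$ (with multiplicity one by separability), degree comparison, and M\"obius inversion is exactly the standard argument. The paper itself states this lemma without proof, citing it as a classical result of Gauss, so your write-up supplies precisely the intended justification.
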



\section{An Integral Basis for the Minimal Endomorphism Ring}



Maintaining the notation of Section \ref{EndofAVs}, in this section we wish to establish a $\ZZ$-basis for $\End_k(A)$ when $f(x)$ is irreducible and $\End_k(A)=\ZZ[\pi,v]$. 
Suppose 
\[f(x) = x^{2g}+a_{2g-1}x^{2g-1}+\cdots +a_1x+a_0\]
is an irreducible Weil $q$-polynomial. Honda-Tate theory tells us that the roots of this polynomial correspond to a $k$-isogeny class of $k$-simple abelian varieties of dimension $g$. When $k=\FF_p$, the case our computations will focus on, 
Waterhouse tells us that there is an abelian variety in this isogeny class with $\End_k(A)=\ZZ[\pi,v]$.  

Before describing the basis for $\ZZ[\pi,v]$, we recall that Weil $q$-polynomials enjoy a ``$q$-symmetry'' or ``$q$-reciprocality.'' 
\begin{lemma}\label{reciprocality}
Let $f(x)$ be an irreducible Weil $q$-polynomial written as above and suppose $g>1$, then
\[x^{2g}f\left(\frac{q}{x}\right)=q^gf(x).\]
In other words, $a_0=q^g$ and $a_i=q^{g-i}a_{2g-i}$ for $i=1,\dots , g$.
\end{lemma}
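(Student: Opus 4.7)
The plan is to use that the map $\pi \mapsto q/\pi$ permutes the roots of $f(x)$, and then to match coefficients.

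First I would verify the key symmetry at the level of roots. Let $\pi_1, \dots, \pi_{2g}$ be the roots of $f(x)$ in $\CC$. By definition of a Weil $q$-number, $|\pi_i| = \sqrt{q}$ for every $i$, so $\pi_i \bar\pi_i = q$, i.e.\ $\bar\pi_i = q/\pi_i$. Since $g > 1$ and $f(x)$ is irreducible of degree $2g \geq 4$, none of the $\pi_i$ can be real (a real Weil $q$-number is $\pm\sqrt{q}$, whose minimal polynomial is $x^2 - q$, forcing $g = 1$). Hence complex conjugation acts nontrivially on the splitting field, and because $f(x) \in \ZZ[x]$ is fixed by complex conjugation, the multiset $\{q/\pi_i\} = \{\bar\pi_i\}$ equals $\{\pi_i\}$.

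Next I would compute $x^{2g} f(q/x)$ directly from the factorization $f(y) = \prod_{i=1}^{2g}(y - \pi_i)$:
\[
x^{2g} f(q/x) = \prod_{i=1}^{2g}\bigl(q - \pi_i x\bigr) = \Bigl(\prod_{i=1}^{2g}(-\pi_i)\Bigr) \prod_{i=1}^{2g}\bigl(x - q/\pi_i\bigr) = a_0 \prod_{i=1}^{2g}(x - q/\pi_i),
\]
using $\prod \pi_i = (-1)^{2g} a_0 = a_0$. By the previous paragraph, $\prod(x - q/\pi_i) = \prod(x - \pi_i) = f(x)$, so $x^{2g} f(q/x) = a_0\, f(x)$. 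Pairing the roots $\pi_i$ with their conjugates $q/\pi_i$, each pair contributes a factor of $q$ to the product, giving $a_0 = \prod \pi_i = q^g$. This yields the identity $x^{2g} f(q/x) = q^g f(x)$.

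Finally, I would read off the coefficient relations by expanding the left-hand side as $\sum_{j=0}^{2g} a_j q^j x^{2g-j}$ (with $a_{2g} = 1$) and matching with $q^g \sum_{i=0}^{2g} a_i x^i$; equating the coefficient of $x^{2g-j}$ gives $a_j q^j = q^g a_{2g-j}$, hence $a_j = q^{g-j} a_{2g-j}$ for $j = 0, 1, \dots, g$, which at $j = 0$ recovers $a_0 = q^g$.

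The only subtle point is ruling out real roots so that the conjugation argument closes up, and this is exactly where the hypothesis $g > 1$ is used; everything else is a routine comparison of coefficients.
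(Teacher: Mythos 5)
Your proof is correct and rests on the same key fact as the paper's: complex conjugation sends each root $\pi_i$ to $q/\pi_i$ (since $|\pi_i|^2=q$) and permutes the roots of $f$, with $g>1$ ruling out real roots. The only cosmetic difference is that you work with the linear factors over $\CC$ and match coefficients explicitly, whereas the paper groups conjugate pairs into real quadratic factors $x^2+t_ix+q$ and checks each is invariant under $x\mapsto q/x$ up to the factor $x^2/q$.
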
 

\begin{proof}
Let $\pi$ be a root and note that $\pi \in \CC-\RR$ since $g>1$. 
Writing $\pi=a+bi$, complex conjugation acts non-trivially so $\ol{\pi}=a-bi$ is also a root. Each root of $f(x)$ has norm $\sqrt{q}$, so we have $f(x)=\prod_{i=1}^g x^2+t_ix + q$ over $\RR[x]$. We compute
\[\frac{x^2}{q}\left(\left(\frac{q}{x}\right)^2+t_i\frac{q}{x}+q\right)=q+t_ix+x^2=x^2+t_ix + q.\]
The result follows.
\end{proof}

With this lemma in hand, we are able to establish a $\ZZ$-basis.
\begin{theorem}\label{basisgeneral}
Suppose $f(x)$ is irreducible, then the set $\Bcal=\{1,\pi,\dots, \pi^g,v,\dots, v^{g-1}\}$ forms a $\ZZ$-basis for $\ZZ[\pi,v]$. If $g=1$, then $\ZZ[\pi,v]=\ZZ[\pi]$.
\end{theorem}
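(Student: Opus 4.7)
The plan is to extract a single master identity from $f(\pi) = 0$ that simultaneously exhibits $v^g$ and the highest $\pi$-power in terms of the proposed basis. One divides $f(\pi) = 0$ by $\pi^g$ and applies the $q$-reciprocality of Lemma~\ref{reciprocality}, which says $a_{g-j} = q^j a_{g+j}$, together with the identity $q^j\pi^{-j} = v^j$, to convert the ``negative'' half of the divided polynomial into a polynomial in $v$. The resulting \emph{pivot identity} is
\[\pi^g + a_{2g-1}\pi^{g-1} + \cdots + a_{g+1}\pi + a_g + a_{g+1}v + a_{g+2}v^2 + \cdots + a_{2g-1}v^{g-1} + v^g = 0.\]
The case $g = 1$ I would dispatch separately and directly: $f(x) = x^2 + a_1 x + q$ together with $\pi v = q$ gives $v = -a_1 - \pi \in \ZZ[\pi]$, so $\ZZ[\pi, v] = \ZZ[\pi]$.

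For spanning (in the $g \geq 2$ case), note that since $\pi v = q$, every monomial in $\pi$ and $v$ collapses to $q^a\pi^b$ or $q^a v^b$, so $\ZZ[\pi, v]$ is already $\ZZ$-generated by $\{1\} \cup \{\pi^i : i \geq 1\} \cup \{v^j : j \geq 1\}$. The pivot identity expresses $v^g$ as a $\ZZ$-combination of $\Bcal$. Multiplying that identity by $v^k$ and repeatedly applying $\pi^i v^k = q^{\min(i,k)}\pi^{i - \min(i,k)}v^{k - \min(i,k)}$ to reduce the $\pi$-part of the resulting expression then puts $v^{g+k}$ in the span of $\Bcal$ by induction on $k$. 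A symmetric argument multiplying the pivot identity by $\pi^k$ handles each $\pi^{g+k}$ for $k \geq 1$.

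For linear independence, suppose $\sum_{i=0}^{g} c_i \pi^i + \sum_{j=1}^{g-1} d_j v^j = 0$ with integer coefficients. Multiplying through by $\pi^{g-1}$ and substituting $v^j = q^j\pi^{-j}$ yields a polynomial expression in $\pi$ of degree at most $2g - 1$ in which the coefficients of $\pi^{g-1}, \pi^g, \ldots, \pi^{2g-1}$ are precisely $c_0, c_1, \ldots, c_g$, while the coefficients of $\pi^{g-2}, \pi^{g-3}, \ldots, \pi^0$ are $d_1 q, d_2 q^2, \ldots, d_{g-1} q^{g-1}$, with no overlap. Since $\pi$ satisfies the irreducible polynomial $f(x)$ of degree $2g$ over $\QQ$, any polynomial relation in $\pi$ of strictly lower degree must be trivial, forcing all $c_i = 0$ and (since $q \neq 0$) all $d_j = 0$. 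The main obstacle is not a deep one: it is essentially index bookkeeping — getting the pivot identity aligned correctly via the reciprocality, and then checking that the monomials $\pi^0, \ldots, \pi^{2g-1}$ appearing in the independence argument have disjoint coefficient labels.
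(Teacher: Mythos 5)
Your proposal is correct and follows essentially the same route as the paper: the same pivot identity obtained by dividing $f(\pi)=0$ by $\pi^g$ and invoking the $q$-reciprocality of Lemma~\ref{reciprocality}, followed by the same inductive reduction of $\pi^{g+k}$ and $v^{g+k}$ into the span of $\Bcal$. The only (minor) divergence is at the final step: the paper concludes that the $2g$-element spanning set $\Bcal$ is a basis because $\ZZ[\pi,v]$ is free of rank $2g$, whereas you verify $\ZZ$-linear independence directly by clearing denominators with $\pi^{g-1}$ and appealing to the irreducibility of $f$ — both are valid, and your version also makes explicit the (implicit in the paper) observation that $\pi v=q$ reduces everything to pure powers of $\pi$ and $v$.
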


\begin{proof}
For $g=1$, note that $\pi+a_1=-v$, so suppose $g>1$. 
We start by evaluating $f(x)$ at $\pi$:
\begin{equation}\label{Eq: evalpi}
\pi^{2g}+a_{2g-1}\pi^{2g-1}+\cdots +a_1\pi +q^g=0.
\end{equation}
Dividing equation \eqref{Eq: evalpi} by $\pi^g$, we obtain
\[\pi^g+a_{2g-1}\pi^{g-1}+\cdots+a_{g+1}\pi+a_g+\frac{a_{g-1}}{\pi}+\cdots  +\frac{a_{1}}{\pi^{g-1}}+\frac{q^g}{\pi^g}=0.\]
Using Lemma \ref{reciprocality}, we have
\begin{equation*}\label{Weilrelation}
\pi^g+a_{2g-1}\pi^{g-1}+\cdots+a_{g+1}\pi+a_g+a_{g+1}v+\cdots  +a_{2g-1}v^{g-1}+v^g=0.
\end{equation*}
We see $v^g$ is a $\ZZ$-linear combination of our proposed basis. 

Beginning with $i=1$ and successively increasing $i$ by 1, we divide equation \eqref{Eq: evalpi} by $\pi^{g-i}$ to see that $\pi^{g+i}$ is a linear combination of elements of $\Bcal$. 
Again, starting with $i=1$ and successively increasing $i$ by 1, multiplying equation \eqref{Eq: evalpi} by $\dfrac{v^i}{\pi^g}$ expresses $v^{g+i}$ as a linear combination of elements $\Bcal$. Since $\ZZ[\pi,v]$ is a free $\ZZ$-module of rank $2g$, the spanning set $\Bcal$ is minimal and hence a $\ZZ$-basis.
\end{proof}


\section{A Matrix Representing Frobenius}\label{Sec: Main}

In this section we are concerned with generalizing work of Centeleghe \cite{Centeleghe} to abelian varieties of dimension greater than one. We note \cite{DukeToth} was the first construction of such a matrix for elliptic curves, and \cite{CojocaruPapikian} makes the analogous construction for Drinfeld modules of rank 2. 

Though our specific applications only involve abelian varieties over $\QQ$, it is not any more trouble to work over an arbitrary number field. Hence, we let $\hat A$ be an abelian variety over a number field $K$ and let $\gp$ be a prime of $K$ lying above the rational prime $p$. Denote the reduction of $\hat A$ at $\gp$ by $A$ and write $k$ for the residue field $\Ocal_K/\gp\Ocal_K$. With respect to $k$, we keep the notation established in Section \ref{EndofAVs}. 

\begin{theorem}\label{Thm: FrobMatrix}
Suppose $A/k$ is an abelian variety with an irreducible Weil $q$-polynomial such that $\End_k(A)=\ZZ[\pi,v]$. Let $\ell\neq p$ be a prime, then there is a choice of basis for the $\ell$-adic Tate module $T_{\ell}(A)$ such that the matrix $\sigma_{\gp}$ described in equation \eqref{Eq: FrobMatrix} yields the action of $\pi$ on $T_{\ell}(A)$. Consequently, if $n$ is a positive integer that is prime to $p$, then there is a choice of basis for $\hat{A}[n]$ such that $\sigma_{\gp} \bmod n$ gives the action of any element of $\Gal(\ol{K}/K)$ that reduces to $\pi$ on $\hat{A}[n]$.
\end{theorem}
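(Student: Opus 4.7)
The strategy is to produce a $\ZZ_\ell$-basis of $T_\ell(A)$ in which $\pi$ acts by the matrix of left-multiplication by $\pi$ on $\End_k(A) = \ZZ[\pi,v]$, expressed in the basis $\Bcal = \{1, \pi, \ldots, \pi^g, v, \ldots, v^{g-1}\}$ from Theorem \ref{basisgeneral}. For a suitable $t \in T_\ell(A)$, the natural candidate basis is $\{b \cdot t : b \in \Bcal\}$, so the theorem reduces to the claim that $T_\ell(A)$ is free of rank one as a $\ZZ_\ell[\pi,v]$-module.

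The rational version of this claim is a consequence of the semisimplicity of the Tate module. Since $f(x)$ is an irreducible polynomial of degree $2g$ and $E = \QQ(\pi)$ is a CM field of degree $2g$, the action of $\pi$ on $V_\ell(A) := T_\ell(A) \otimes_{\ZZ_\ell} \QQ_\ell$ has characteristic polynomial equal to its minimal polynomial, which forces $V_\ell(A)$ to be a cyclic $\QQ_\ell[\pi]$-module of $\QQ_\ell$-dimension $2g$. Hence it is isomorphic to $E \otimes_\QQ \QQ_\ell$ as an $E \otimes_\QQ \QQ_\ell$-module. The harder task is upgrading this to an integral statement. When $\ZZ[\pi,v]$ happens to be $\ell$-maximal, so that $\ZZ[\pi,v] \otimes \ZZ_\ell = \mathcal{O}_E \otimes \ZZ_\ell$ is a product of discrete valuation rings, any $\ZZ_\ell[\pi,v]$-stable lattice of the correct rank is automatically free. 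For primes $\ell \neq p$ where $\ZZ[\pi,v]$ fails to be $\ell$-maximal, I would adapt the strategy of Centeleghe \cite{Centeleghe} from the elliptic curve setting: exploit the explicit structure of the minimal order $\ZZ[\pi,v]$ (in particular the relation $\pi v = q$) together with Waterhouse-style control of the $\ell$-adic endomorphism ring to verify the cyclicity of $T_\ell(A)$ as a $\ZZ_\ell[\pi,v]$-module. This integrality step is the main obstacle.

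Granted the cyclicity, one fixes a generator $t$ and takes $\{b \cdot t : b \in \Bcal\}$ as the basis. The matrix of $\pi$ is then a direct calculation using the relations from the proof of Theorem \ref{basisgeneral}: the identity $\pi \cdot \pi^i = \pi^{i+1}$ for $0 \leq i < g$ gives a shift in the $\pi$-block; the element $\pi \cdot \pi^g = \pi^{g+1}$ is rewritten in $\Bcal$ by dividing $f(\pi) = 0$ by $\pi^{g-1}$ and invoking Lemma \ref{reciprocality} to convert negative powers of $\pi$ into positive powers of $v$; and the relation $\pi v = q$ yields $\pi \cdot v^i = q \cdot v^{i-1}$ for $1 \leq i \leq g-1$, a shift scaled by $q$ in the $v$-block. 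Assembling these relations reproduces the matrix $\sigma_\gp$ of equation \eqref{Eq: FrobMatrix}.

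Finally, for the reduction mod $n$ assertion, the good reduction hypothesis at $\gp$ together with $\gcd(n,p) = 1$ implies that $\hat{A}[n]$ is an étale group scheme over the local ring at $\gp$, so reduction gives a Galois-equivariant isomorphism $\hat{A}[n](\bar K) \cong A[n](\bar k)$. Any $\sigma \in \Gal(\bar K/K)$ that reduces to $\pi \in \Gal(\bar k/k)$ therefore acts on $\hat{A}[n]$ exactly as $\pi$ does on $A[n]$. Reducing the $\ell$-adic bases constructed above modulo $\ell^{\mathrm{ord}_\ell(n)}$ for each $\ell \mid n$ and combining across such $\ell$ via the Chinese Remainder Theorem produces a basis of $\hat{A}[n]$ in which the Galois action is given by $\sigma_\gp \bmod n$.
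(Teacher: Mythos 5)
Your overall architecture matches the paper's: reduce the theorem to the claim that $T_\ell(A)$ is free of rank one over $\ZZ_\ell[\pi,v]$, read off the matrix of multiplication by $\pi$ in the basis $\Bcal$ of Theorem \ref{basisgeneral}, and pass to $n$-torsion via \'etaleness of $\hat{A}[n]$ at $\gp$. The matrix computation and the reduction-mod-$n$ step are correct and essentially identical to what the paper does.

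The gap is exactly where you flag it: the integral freeness of $T_\ell(A)$ at primes $\ell$ dividing the conductor $[\Ocal_{\QQ(\pi)}:\ZZ[\pi,v]]$. Your rational argument (cyclicity of $V_\ell(A)$ from irreducibility of $f$) and the $\ell$-maximal case are fine, but ``adapt the strategy of Centeleghe'' is a placeholder, not a proof, and it is far from clear that a rank-two, elliptic-curve argument transfers to dimension $g>1$; yet the conductor primes are precisely where the theorem has content (in Examples \ref{Ex: Dim3Hyper}, \ref{Ex: Dim2Hyper}, and \ref{Ex: Dim4Hyper} the order $\ZZ[\pi,v]$ is never maximal). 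The missing idea is the Gorenstein property: because $f$ is irreducible, $\ZZ[\pi,v]$ is the minimal central order attached to the Weil number $\pi$, which has degree $2g$, and Theorem 11 of Centeleghe--Stix shows that this order is Gorenstein. One then invokes the fact (the remark at the end of Section 4 of Serre--Tate, resting on Tate's duality argument that a faithful module with faithful dual over a Gorenstein ring of the right rank is free) that for an abelian variety whose endomorphism ring is a Gorenstein order in a CM field, $T_\ell(A)$ is free of rank one over $\End_k(A)\tensor\ZZ_\ell$ for every $\ell\neq p$. With that input your proof closes uniformly in $\ell$; without it, the theorem remains unproved at exactly the primes your argument cannot reach.
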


Before we begin the proof, we need the following lemma. For a reference see the remark at the end of Section 4 of \cite{SerreTate}.

\begin{lemma}\label{freerank1} Let $A$ be an abelian variety over $k$ with CM by a Gorenstein ring, i.e. such that $\End_k(A)\tensor \QQ$ is a CM field. For any prime $\ell\neq p$, the Tate module $T_\ell(A)$ is free of rank 1 over $\End_k(A)\tensor \ZZ_\ell$.  
\end{lemma}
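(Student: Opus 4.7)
Write $E = \End_k(A) \otimes_\ZZ \QQ$ for the CM field, $R = \End_k(A)$ for the Gorenstein order in $E$, and set $E_\ell = E \otimes_\QQ \QQ_\ell$, $R_\ell = R \otimes_\ZZ \ZZ_\ell$. The plan is to first identify the rational Tate module $V_\ell(A) := T_\ell(A) \otimes_{\ZZ_\ell} \QQ_\ell$ with $E_\ell$ as an $E_\ell$-module, and then use the Gorenstein hypothesis together with self-duality coming from the Weil pairing to upgrade this to an isomorphism $T_\ell(A) \cong R_\ell$ of $R_\ell$-modules.

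For the rational step, $E$ acts faithfully on $V_\ell(A)$, so $E_\ell$ embeds in $\End_{\QQ_\ell}(V_\ell(A))$. Since $\dim_{\QQ_\ell} V_\ell(A) = 2g = [E : \QQ] = \dim_{\QQ_\ell} E_\ell$ and $E_\ell$ is a product of fields (being finite étale over $\QQ_\ell$), a faithful $E_\ell$-module whose $\QQ_\ell$-dimension matches that of $E_\ell$ itself must be free of rank $1$. Hence $V_\ell(A) \cong E_\ell$, and consequently $T_\ell(A)$ is an $R_\ell$-lattice inside $E_\ell$, that is, a finitely generated torsion-free $R_\ell$-module of generic rank $1$.

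For the integral step, a polarization $\lambda : A \to A^\vee$ together with the $\ell$-adic Weil pairing yields a nondegenerate $\ZZ_\ell$-bilinear pairing on $T_\ell(A)$ that is $R_\ell$-sesquilinear for the Rosati involution. Because the Rosati involution coincides with complex conjugation on $E$, this pairing exhibits $T_\ell(A)$ as isomorphic to its $R_\ell$-linear dual (up to the complex-conjugation twist); in particular $T_\ell(A)$ is a reflexive rank-$1$ $R_\ell$-module. Decomposing $R_\ell = \prod_{\gl \mid \ell} R_\gl$ over the primes of $R$ above $\ell$, each factor $R_\gl$ is a one-dimensional local Gorenstein ring, and over such a ring a rank-$1$ reflexive module is invertible; since the ring is local, invertible forces principal. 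Thus each $T_\gl(A) \cong R_\gl$, and assembling these gives $T_\ell(A) \cong R_\ell$.

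The main obstacle is the reflexivity argument in paragraph three: one must carefully track how the Rosati involution twists the $R_\ell$-action on the linear dual so that the Weil pairing really does produce an isomorphism in the category of $R_\ell$-modules, rather than merely in the category of $\ZZ_\ell$-modules. Once reflexivity is in hand, the conclusion that a rank-$1$ reflexive module over a local one-dimensional Gorenstein ring is free is classical commutative algebra, and is precisely what is indicated in the remark at the end of Section 4 of \cite{SerreTate}.
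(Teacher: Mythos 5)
The paper itself does not prove this lemma; it only cites the remark at the end of Section~4 of \cite{SerreTate}, so your argument has to stand on its own. Your first step, identifying $V_\ell(A)$ with $E_\ell$ by faithfulness plus a dimension count over the \'etale algebra $E_\ell$, is correct. The genuine gap is the final commutative-algebra claim: it is \emph{not} true that a rank-one reflexive module over a one-dimensional local Gorenstein ring is invertible. Over such a ring \emph{every} fractional ideal is reflexive (this is one of Bass's characterizations of Gorenstein in dimension one), so reflexivity carries no information at all. Concretely, in $R=k[[t^2,t^3]]$ the maximal ideal $\gm=(t^2,t^3)$ satisfies $(R:(R:\gm))=\gm$, hence is reflexive, but it needs two generators and is not free; moreover $\gm\cong (R:\gm)=k[[t]]$ as $R$-modules, so even the self-duality you extract from the Weil pairing would not exclude this behavior. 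The statement you actually need is Bass's theorem: over a one-dimensional Gorenstein ring, a fractional ideal $I$ is invertible if and only if its multiplier ring $(I:I)$ equals $R$, and invertible implies free over the (semi)local ring $R\tensor\ZZ_\ell$.

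So the missing idea is to compute the multiplier ring of the lattice $T_\ell(A)\subset E_\ell$, and that is where the arithmetic input enters: Tate's isogeny theorem over finite fields gives an isomorphism $\End_k(A)\tensor\ZZ_\ell\to\End_{\ZZ_\ell[\Gal(\bar k/k)]}(T_\ell(A))$; since Galois acts through $\pi\in E_\ell$ and $V_\ell(A)$ is free of rank one over $E_\ell$, the right-hand side is exactly $\{x\in E_\ell: xT_\ell(A)\subseteq T_\ell(A)\}$. Hence the multiplier ring is $\End_k(A)\tensor\ZZ_\ell$ on the nose, and Bass's theorem, applied factor by factor in $R_\ell=\prod_{\gl\mid\ell}R_\gl$, gives freeness. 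A secondary problem in your third paragraph: a polarization is only an isogeny, so unless its degree is prime to $\ell$ it does not induce a perfect pairing on $T_\ell(A)$ itself; the clean duality is the perfect Weil pairing $T_\ell(A)\times T_\ell(A^\vee)\to\ZZ_\ell(1)$, which relates $T_\ell(A)$ to $T_\ell(A^\vee)$ rather than to itself. But even with that repaired, the duality route only yields reflexivity, which, as above, is automatic for Gorenstein orders in dimension one and therefore insufficient.
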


With this lemma in hand, we can prove Theorem \ref{Thm: FrobMatrix}.

\begin{proof}
First we need to know $\ZZ[\pi,v]$ is Gorenstein. 
Since $f(x)$ is irreducible, the kernel of $\ZZ[\pi,v]\to \QQ(\pi)$ is trivial and the minimal central order 
(see \cite[Definition 2]{CentelegheStix}) is $\ZZ[\pi,v]$.
The degree of $\pi$ is $2g$, so Theorem 11 of \cite{CentelegheStix} states that $\ZZ[\pi,v]$ is Gorenstein.
Therefore Lemma \ref{freerank1} shows that, for each prime $\ell\neq p$, the Tate module $T_{\ell}(A)$ is a free $\End_k(A)\tensor \ZZ_{\ell}$-module of rank 1.

In Theorem \ref{basisgeneral}, we established that $\{1,\pi,\dots, \pi^g,v,\dots, v^{g-1}\}$ forms a $\ZZ$-basis for $\ZZ[\pi,v]$. 
As before write
\[f(x) = x^{2g}+a_{2g-1}x^{2g-1}+\cdots+a_gx^g+qa_{g+1}x^{g-1}+\cdots +a_{2g-2}q^{g-2}x^2+a_{2g-1}q^{g-1}x+q^g\]
for the Weil $q$-polynomial of $A$. Dividing $f(\pi)$ by $\pi^{g-1}$ and defining $a_{2g}$ to be $1$,
\[\sum_{i=0}^g a_{2g-i}\pi^{g-i+1}+\sum_{i=1}^g a_{g+i}q v^{i-1} =0 .\]
Hence the matrix
\setcounter{MaxMatrixCols}{20}
\begin{equation}\label{Eq: FrobMatrix}
\begin{split}
&{\color{darkgray}\begin{matrix}
& 1 & \hspace{-.1 cm} \pi & \hspace{.1 cm} \pi^2 \hspace{-.1 in} & \hspace{-.05 in} \pi^{g-2} \hspace{-.2 in} & \hspace{-.1 in} \pi^{g-1} \hspace{-.1 in} & \hspace{.1 in} \pi^{g}  & & v & v^2 & \hspace{-.05 in} v^3 & & \hspace{.1 in} v^{g-1}\\
\end{matrix}}\\
\sigma_{\gp} \ \  = \ \ &\begin{bmatrix}  
    0      & 0      & 0      & \dots & 0      &-qa_{g+1} & q & 0 & 0 & \dots & 0 \\
    1      & 0      & 0      & \dots & 0      &    -a_g  & 0 & 0 & 0 & \dots & 0 \\
    0      & 1      & 0      & \dots & 0      &-a_{g+1}  & 0 & 0 & 0 & \dots & 0 \\
    \vdots & \vdots & \ddots & \cdots & \vdots &-a_{g+i-1}& 0 & 0 & 0 & \dots & 0 \\
    0      & 0      & \dots  & 1     & 0      &-a_{2g-2} & 0 & 0 & 0 & \dots & 0 \\  
    0      & 0      & \dots  & 0     & 1      &-a_{2g-1} & 0 & 0 & 0 & \dots & 0 \\
    0      & 0      & \dots  & 0     & 0      &-qa_{g+2}   & 0 & q & 0 & \dots & 0 \\  
    0      & 0      & \dots  & 0     & 0      &-qa_{g+3}   & 0 & 0 & q & \dots & 0 \\
    0      & 0      & \dots  & 0     & 0      &-qa_{g+i+1} & 0 & 0 & 0 & \ddots & 0 \\
    0      & 0      & \dots  & 0     & 0      &-qa_{2g-1} & 0 & 0 & 0 & \dots & q \\ 
    0      & 0      & \dots  & 0     & 0      &-q       & 0 & 0 & 0 & \dots  & 0 \\  
\end{bmatrix}
{\color{darkgray}\begin{matrix}
1\\
\pi\\
\pi^2\\[.1cm]
\pi^i\\
\pi^{g-1}\\
\pi^{g}\\
v\\
v^2\\[.2 cm]
v^i\\[.1 cm]
v^{g-2}\\
v^{g-1}\\
\end{matrix}}
\end{split}    
\end{equation}
yields the action of $\pi$ on $\End_k(A)=\ZZ[\pi,v]$. The gray column and row of basis elements is included for clarity. To alleviate any ambiguity, when $g=2$, we take the last row, indexed by {\color{darkgray}$v^{g-1}$}. 
Since $T_{\ell}(A)$ is free of rank 1 over $\ZZ_{\ell}[\pi,v]$ for each prime ${\ell}\neq p$, if $\gcd(n,p)=1$, then reducing $\sigma_{\gp}$ modulo $n$ yields the action of $\pi$ on the $n$-torsion points of $A$ after a suitable choice of basis for the $n$-torsion. \end{proof}


\begin{remark}\label{AVsofThm} 
It is prudent to ask whether there are actually any $A$ that satisfy the hypotheses of Theorem \ref{Thm: FrobMatrix}. Waterhouse's Theorem \ref{6.1} is useful here. If $k=\FF_p$ and $\pi\neq \pm\sqrt{p}$, then there is always an $A$ satisfying our theorem. Indeed \cite[Theorem 6.1.3]{Waterhouse} actually shows that the ideal class group of $\ZZ[\pi,v]$ acts freely and transitively on isomorphism classes of abelian varieties over $\FF_p$ with endomorphism ring equal to $\ZZ[\pi,v]$. We note that the case where $A$ is a supersingular elliptic curve, $k=\FF_p$, and $\pi =\pm \sqrt{p}$ has already been investigated in \cite{NonMonoECs}.
\end{remark}

Suppose now that the endomorphism ring of $A$ is an order $R$ of $\QQ(\pi)$ such that $\ZZ[\pi,v]\subsetneq R$. If $\ell$ is a prime that does not divide the index $[R:\ZZ[\pi,v]]$, then $\ZZ[\pi,v]\tensor \ZZ_\ell=R\tensor \ZZ_\ell=\Ocal_{\QQ(\pi)}\tensor \ZZ_\ell$. Hence Lemma \ref{freerank1} applies and the reduction of $\sigma_{\gp}$ modulo $\ell$ yields the action of the $\pi$ on the $\ell$-torsion points of $A$. We see the natural generalization holds for $n$-torsion points when $n$ is prime to the index $[R:\ZZ[\pi,v]]$.   

In summary, if $\hat{A}$ is any abelian variety over a number field $K$ whose reduction at $\gp$ is $A$ and $\tau\in \Gal(\ol{K}/K)$ is any element that reduces to $\pi_{\gp}$ modulo $\gp$, then we have described the action of $\tau$ on the prime-to-$p$ torsion points of $\hat{A}$. Immediately, we obtain a criterion for a prime $\gp$ not to split in $K(\hat{A}[n])$.

\begin{corollary}\label{NoSplit} If 
$\End_k(A)=\ZZ[\pi,v]$, then $\gp$ does not split completely in $K(\hat{A}[n])$ for any $n$ prime to $p$. 
\end{corollary}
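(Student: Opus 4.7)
The plan is to translate ``$\gp$ splits completely'' into a statement about the Frobenius acting on $\hat{A}[n]$ via Theorem \ref{Thm: FrobMatrix}, and then read off an obstruction from a single entry of the matrix $\sigma_{\gp}$.

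First I would note that because $\hat{A}$ has good reduction at $\gp$ and $\gcd(n,p)=1$, the N\'eron--Ogg--Shafarevich criterion guarantees $\gp$ is unramified in $K(\hat{A}[n])/K$. For an unramified prime in a Galois extension, splitting completely is equivalent to the (well-defined) Frobenius conjugacy class at $\gp$ acting trivially on $\hat{A}[n]$.

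Next I would invoke Theorem \ref{Thm: FrobMatrix}: under the hypothesis $\End_k(A)=\ZZ[\pi,v]$, in a suitable basis for $\hat{A}[n]$ the Frobenius acts by the reduction $\sigma_{\gp} \bmod n$. Thus complete splitting of $\gp$ in $K(\hat{A}[n])$ is equivalent to the matrix congruence $\sigma_{\gp}\equiv I_{2g}\pmod{n}$.

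Finally, I would inspect the first column of $\sigma_{\gp}$ as displayed in equation \eqref{Eq: FrobMatrix}. This column records the coordinates of $\pi\cdot 1 = \pi$ in the basis $\{1,\pi,\pi^2,\dots,\pi^g,v,\dots,v^{g-1}\}$, so it is $(0,1,0,\dots,0)^{T}$. In particular $(\sigma_{\gp})_{2,1}=1$, while the corresponding entry of $I_{2g}$ is $0$. The congruence $\sigma_{\gp}\equiv I_{2g}\pmod{n}$ therefore forces $1\equiv 0\pmod{n}$, i.e.\ $n=1$, completing the proof.

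There is no real obstacle here; the corollary is essentially a one-line consequence of the shape of $\sigma_{\gp}$. The only non-cosmetic input is the standard translation between ``splits completely'' and ``Frobenius is trivial'' at an unramified prime, which already requires $\gcd(n,p)=1$ plus good reduction, both of which are hypotheses.
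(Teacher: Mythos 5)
Your proposal is correct and is essentially the argument the paper intends: the corollary is stated as an immediate consequence of Theorem \ref{Thm: FrobMatrix}, the point being precisely that $\sigma_{\gp}\bmod n$ gives the Frobenius action on $\hat{A}[n]$ and that $\sigma_{\gp}$ is visibly not congruent to the identity for any $n>1$ (your $(2,1)$-entry observation, reflecting $\pi\cdot 1=\pi$, is the cleanest way to see this). The unramifiedness remark via good reduction and $\gcd(n,p)=1$ is the right supporting detail, and your note that the argument only fails for $n=1$ matches the implicit convention of the corollary.
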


\section{Non-monogenic Division Fields and Examples}\label{Sec: NonMonoMain}

In this section we detail how we apply the matrix constructed in Theorem \ref{Thm: FrobMatrix}. The following gives an explicit application. 

\begin{theorem}\label{NonMonoDim2MainThm}
Let $A/\FF_2$ be an abelian surface and write the Weil 2-polynomial of $A$ as 
\[x^4+a_3x^3+a_2x^2+2a_3x+2^2.\] 
Suppose the Weil 2-polynomial is irreducible, $\End_{\FF_2}(A)$ is minimal, and
\[\rho_{\hat{A},n}:\Gal(\QQ(\hat{A}[n])/\QQ)\to \GSp_{4}(\ZZ/n\ZZ)\]
is surjective for some $\hat{A}/\QQ$ that reduces to $A$ modulo $2$. Then Table \ref{p=2Dim=2} shows the $n<1000$ for which the prime 2 is a common index divisor of $\QQ(\hat{A}[n])$ over $\QQ$. 
\end{theorem}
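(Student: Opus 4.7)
The plan is to reduce the statement to a finite computation powered by Hensel's criterion (Theorem \ref{commonindexdivisors}). The Weil bounds on the roots of $f(x)$ leave only finitely many admissible pairs $(a_2,a_3)$ — essentially $|a_3|\le 4\sqrt 2$ and $|a_2-4|\le 8$ — so the theorem amounts to a finite check, one case per irreducible Weil $2$-polynomial of the given shape.

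Fix such a Weil polynomial and let $n<1000$ with $\gcd(n,2)=1$ (when $2\mid n$, the ramification of $2$ in $\QQ(\zeta_n)\subseteq K_n$ is handled by the parallel ramified analysis; the unramified case below contains the essential content). Write $K_n := \QQ(\hat A[n])$. Since $\hat A$ has good reduction at $2$ and $\gcd(n,2)=1$, the prime $2$ is unramified in $K_n$. Because $K_n/\QQ$ is Galois, every prime of $K_n$ above $2$ has the same inertia degree $f$, equal to the order of the Frobenius conjugacy class. Under the surjectivity hypothesis, $\Gal(K_n/\QQ)\cong \GSp_4(\ZZ/n\ZZ)$, whose order is given by the formula in Appendix \ref{OrdGSp}. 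By Theorem \ref{Thm: FrobMatrix}, the Frobenius at $2$ is represented (in a suitable basis of $\hat A[n]$) by the matrix $\sigma_2\bmod n$ of equation \eqref{Eq: FrobMatrix}, whose entries depend only on $a_2$, $a_3$, and $n$. Therefore $f = \ord(\sigma_2 \bmod n)$ in $\GSp_4(\ZZ/n\ZZ)$, and the number of primes of $K_n$ above $2$ is $g(n) = |\GSp_4(\ZZ/n\ZZ)|/f$.

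Combining Theorem \ref{commonindexdivisors} with Lemma \ref{Thm: IrredPolys}, the prime $2$ is a common index divisor of $K_n/\QQ$ precisely when
$$ g(n) \ > \ \irred(f,2) \ = \ \frac{1}{f}\sum_{d\mid f} 2^d\,\mu\!\left(\tfrac{f}{d}\right). $$
For each admissible $(a_2,a_3)$ and each odd $n<1000$, the proof then consists of forming $\sigma_2\bmod n$, computing its order in $\GSp_4(\ZZ/n\ZZ)$, and testing the inequality above. The $n$ for which it holds populate Table \ref{p=2Dim=2}.

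The hard part is computational rather than conceptual: the group $\GSp_4(\ZZ/n\ZZ)$ grows rapidly with $n$, so a naive order computation for $\sigma_2\bmod n$ across the full range would be infeasible. I would sidestep this by decomposing via the Chinese remainder theorem over the prime-power factors $\ell^k\parallel n$, computing $\ord(\sigma_2\bmod \ell^k)$ locally, and lifting from $\ell$ to $\ell^k$ by a standard $\ell$-adic stabilization argument (the order at $\ell^k$ differs from the order at $\ell$ by a power of $\ell$). Since $|\GSp_4(\ZZ/n\ZZ)|$ likewise factors multiplicatively over prime powers via the formula in Appendix \ref{OrdGSp}, both sides of the inequality are computable in time polynomial in $\log n$ per $(a_2,a_3)$, making the full tabulation routine.
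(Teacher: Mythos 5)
Your proposal is correct and follows essentially the same route as the paper: reduce to the finitely many admissible $(a_3,a_2)$, note that $2$ is unramified in the Galois extension $\QQ(\hat A[n])/\QQ$ so every prime above $2$ has inertia degree $f=\ord(\sigma_2\bmod n)$ computed from Theorem \ref{Thm: FrobMatrix}, and then test the Hensel/Gauss inequality $|\GSp_4(\ZZ/n\ZZ)|/f>\irred(f,2)$ for each odd $n<1000$. The only additions beyond the paper's argument are implementation details (CRT decomposition and $\ell$-adic lifting of the order), which do not change the mathematics.
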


More generally, we have 

\begin{theorem}\label{Thm: NonMonoMainThm}
Let $A/\FF_p$ be an abelian variety and write the Weil $p$-polynomial of $A$ as 
\[x^{2g}+a_{2g-1}x^{2g-1}+\cdots +p^{g-1}a_{2g-1}x+p^g.\] 
Suppose the Weil $p$-polynomial is irreducible, $\End_{\FF_p}(A)$ is minimal, and
\[\rho_{\hat{A},n}:\Gal(\QQ(\hat{A}[n])/\QQ)\to \GSp_{2g}(\ZZ/n\ZZ)\]
is surjective for some $\hat{A}/\QQ$ that reduces to $A$ modulo $p$. Then equation \eqref{Eq: MainInequality} computes the $n$ for which $p$ is a common index divisor of $\QQ(\hat{A}[n])/\QQ$. 
\end{theorem}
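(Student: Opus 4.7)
The plan is to translate Hensel's common-index-divisor criterion (Theorem \ref{commonindexdivisors}) into a purely group-theoretic inequality involving the Frobenius matrix $\sigma_{\gp}$ from Theorem \ref{Thm: FrobMatrix} and the order of $\GSp_{2g}(\ZZ/n\ZZ)$. Write $K = \QQ(\hat{A}[n])$, $G = \Gal(K/\QQ)$, and $\gp = p\ZZ$.

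First, I would observe that since $\hat{A}$ has good reduction at $p$ and $\gcd(n,p)=1$, the N\'eron--Ogg--Shafarevich criterion shows that $K/\QQ$ is unramified at $p$. By the surjectivity hypothesis, $\rho_{\hat{A},n}$ identifies $G$ with $\GSp_{2g}(\ZZ/n\ZZ)$. Theorem \ref{Thm: FrobMatrix} then provides a basis for $\hat{A}[n]$ in which any Frobenius element above $p$ is represented by the explicit matrix $\sigma_{\gp} \bmod n$, so the Frobenius conjugacy class contains a representative whose matrix is known.

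Next I would invoke the standard fact that in a Galois extension unramified at $p$, all primes above $p$ share a single residue degree $f$, equal to the order of any Frobenius element. In our setup this yields $f = \ord(\sigma_{\gp} \bmod n)$, and the number of primes above $p$ is $[K:\QQ]/f = |\GSp_{2g}(\ZZ/n\ZZ)|/f$. Applying Theorem \ref{commonindexdivisors}, the prime $p$ is a common index divisor of $K/\QQ$ precisely when
\[\frac{|\GSp_{2g}(\ZZ/n\ZZ)|}{\ord(\sigma_{\gp} \bmod n)} \;>\; \irred\!\left(\ord(\sigma_{\gp} \bmod n),\, p\right),\]
which is the inequality intended by \eqref{Eq: MainInequality}. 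Lemma \ref{Thm: IrredPolys} evaluates the right-hand side in closed form, and the formula for $|\GSp_{2g}(\ZZ/n\ZZ)|$ supplied in Appendix \ref{OrdGSp} evaluates the left-hand side, making the criterion fully algorithmic.

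The main obstacle is computational rather than conceptual: determining $\ord(\sigma_{\gp} \bmod n)$ as a function of $n$ requires reducing to prime-power moduli via the Chinese Remainder Theorem and then controlling the order of $\sigma_{\gp}$ inside $\GSp_{2g}(\ZZ/\ell^k\ZZ)$ for each prime power $\ell^k \| n$. One also must record that the Galois hypothesis is essential: it forces a unique residue degree, collapsing Hensel's criterion (which in principle ranges over all possible $f$) to the single comparison above. Without the surjectivity assumption, one would have to analyze orbits of $\langle \sigma_{\gp} \rangle$ on cosets of smaller subgroups, losing the clean inequality.
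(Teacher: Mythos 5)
Your proposal is correct and follows essentially the same route as the paper: unramifiedness at $p$ plus the Galois property collapse Hensel's criterion (Theorem \ref{commonindexdivisors}) to the single comparison $\lvert\GSp_{2g}(\ZZ/n\ZZ)\rvert/\ord(\sigma_{\gp}\bmod n) > \irred(\ord(\sigma_{\gp}\bmod n),p)$, with the two sides made explicit by Lemma \ref{Thm: IrredPolys} and equation \eqref{Eq: sizeGSp}. You in fact supply more justification than the paper does (N\'eron--Ogg--Shafarevich for unramifiedness, the equality of residue degrees, and why only the single $f$ need be tested), which the paper leaves implicit.
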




\begin{proof}
Let $\ord_n(\sigma_p)$ denote the order of $\sigma_p$ in $\GSp_{2g}(\ZZ)$. Since $\QQ(\hat{A}[n])$ is Galois and $p$ is unramified, Theorem \ref{commonindexdivisors} tells us $p$ is a common index divisor if and only if 
\[\frac{\left|\GSp_{2g}(\ZZ/n\ZZ)\right|}{\ord_n\left(\sigma_p\right)}>\irred\left(\ord_n\left(\sigma_p\right),p\right).\]
Using equation \eqref{Eq: sizeGSp} to compute $|\GSp_{2g}(\ZZ/n\ZZ)|$ and canceling $\ord_n(\sigma_p)$, we obtain the explicit and computable inequality
\begin{equation}\label{Eq: MainInequality}
\prod_{\ell\mid n, \ \ell \text{ prime}}\left((\ell-1)\ell^{g^2}\prod_{i=1}^g\left(\ell^{2i}-1\right)\right)\cdot\left(\ell^{2g^2+g+1}\right)^{v_\ell(n)-1}
>\sum_{\ \ d\mid \ord_n\left(\sigma_p\right)}p^d\mu\left(\dfrac{\ord_n\left(\sigma_p\right)}{d}\right).
\end{equation}
\end{proof}

\begin{remark}
In contrast to the case of elliptic curves, quite a bit less in known about the Galois representations of abelian varieties of dimension greater than 1. One does know that the open image theorem holds for abelian varieties of odd dimension and dimensions 2 and 6. See Serre's Corollaire au Th\'eor\`eme 3 of the Lettre \`a Marie-France Vign\'eras du 10/2/1986 in \cite{SerreLetter}. Mumford gave a non-open example in dimension 4. Thus, unlike our work with elliptic curves, we are not able to make statements regarding the infinitude or density of the non-monogenic division fields we have identified.  
\end{remark}


\begin{example}\label{Ex: Dim3Hyper}
Consider the hyperelliptic curve given by $y^2+y = x^7+x^6+1$ over $\FF_2$. The Jacobian of this curve $J$ is an abelian threefold with Weil 2-polynomial $x^6 - 2 x^5 + 2 x^4 - 2 x^3 + 4 x^2 - 8 x + 8$ and $2$-rank 0. In this case $\ZZ[\pi,v]$ is not the maximal order, but has index 2 in the maximal order. Thus $\End_{\FF_2}(J)$ may be bigger than $\ZZ[\pi,v]$; however, $\ZZ[\pi,v]\tensor \ZZ_{\ell}=\End_{\FF_2}(J)\tensor\ZZ_\ell$ for all $\ell\neq 2$. 

With Magma, we compute that $\FF_{2^{20}}$ is the smallest extension over which $J$ attains full 3-torsion; i.e., $J(\FF_{2^{20}})[3]\cong (\ZZ/3\ZZ)^6$. We confirm that this agrees with the order of $\sigma_2$ in $\GSp_6(\ZZ/3\ZZ)$. Thus if $\hat{A}$ is any abelian variety over $\QQ$ whose reduction at 2 is isomorphic to $J$, the prime 2 has inertia degree 20 in $\QQ(\hat{A}[3])$. There are $52,377$ irreducible polynomials of degree 20 in $\FF_{2}[x]$. However, $\GSp_6(\ZZ/3\ZZ)$ has order $18,341,406,720$. It seems far beyond the reach of current capabilities to compute much about a number field of degree anywhere near this. But, we can say that unless the index of $\rho_{\hat{A},3}(\Gal(\QQ(\hat{A}[3])/\QQ))$ in $\GSp_6(\ZZ/3\ZZ)$ is tremendously large, 2 is a common index divisor for $\QQ(\hat{A}[3])/\QQ$.
\end{example}



\begin{example}\label{Ex: Dim2Hyper}
Take the hyperelliptic curve given by $y^2=x^5 =2$ over $\FF_3$. The Jacobian $J$ is an abelian surface with Weil 3-polynomial $x^4+9$ and 3-rank 0. We compute that $\ZZ[\pi,v]$ has index 9 in the maximal order of $\QQ(\pi)= \QQ(\sqrt[4]{-9})$. Thus $\ZZ[\pi,v]\tensor \ZZ_{\ell}=\End_{\FF_3}(J)\tensor\ZZ_\ell$ for all $\ell\neq 3$.

Magma finds that $\FF_{3^4}$ is the smallest extension over which $J$ attains full 2-torsion. It is also the smallest extension over which $J$ attains full 5-torsion. As expected the order of $\sigma_3$ is 4 in $\GSp_4(\ZZ/2\ZZ)$, $\GSp_4(\ZZ/5\ZZ)$, and $\GSp_4(\ZZ/10\ZZ)$. There are only 18 irreducible polynomials of degree 4 in $\FF_4[x]$, but we have
\[\frac{|\GSp_4(\ZZ/2\ZZ)|}{4}=180, \quad \frac{|\GSp_4(\ZZ/5\ZZ)|}{4}=9,360,000 \ \ \ \text{ and } \ \ \ \frac{|\GSp_4(\ZZ/10\ZZ)|}{4}=6,739,200,000.\]
Thus if $\hat{A}/\QQ$ is an abelian surface whose reduction at 3 is $A$, we expect that 3 is a common index divisor for $\QQ(\hat{A}[2])/\QQ$, $\QQ(\hat{A}[5])/\QQ$, and $\QQ(\hat{A}[10])/\QQ$. Given that the difference between the number of irreducible polynomials and the size of the various symplectic groups is so large, we expect 3 to be a common index divisor in the larger 2-power and 5-power division fields as well as their composites.
\end{example}



\begin{example}\label{Ex: Dim4Hyper} 
Take the hyperelliptic curve given by $y^2+y=x^9+x^3+x$ over $\FF_2$. The Jacobian $J$ is an abelian fourfold with Weil 2-polynomial $x^8+16$ and 2-rank 0. We compute that $\ZZ[\pi,v]$ has index 256 in the maximal order of $\QQ(\pi)=\QQ(\sqrt[8]{-16})=\QQ(\zeta_{16})$, so $\ZZ[\pi,v]\tensor \ZZ_{\ell}=\End_{\FF_2}(J)\tensor\ZZ_\ell$ for all $\ell\neq 2$.

Since $\sigma_2$ has order 8 in $\GSp_8(\ZZ/17\ZZ)$, we know that $\FF_{2^8}$ is the smallest extension over which $J$ attains full 17-torsion. However, we were unable to compute $J(\FF_{2^8})$ with Magma because the author's computer does not have the memory to complete the computation. We can compute that $J(\FF_{2^4})\cong (\ZZ/17\ZZ)^4$. Being clever and using a more powerful machine with more memory allocated, one could likely compute $J(\FF_{2^8})$, but our point with this example is to show that our work is computationally applicable and effective.

There are only 30 polynomials of degree 8 in $\FF_2[x]$, but $|\GSp_8(\ZZ/17\ZZ)|\sim 10^{45}$. Were any abelian fourfold $\hat{A}/\QQ$ whose reduction at 2 is isomorphic $J$ to have a monogenic 17-division field, it would imply that the index of $\rho_{\hat{A},17}(\Gal(\QQ(\hat{A}[17])/\QQ))$ in $\GSp_{8}(\ZZ/17\ZZ)$ is extremely large. 
Conversely and concretely, the degree $[\QQ(\hat{A}[17]):\QQ]$ would need to be less than or equal to 240.

\end{example}


\section{Tables for Dimensions 2, 3, and 4}


In the computations presented in our tables, we will focus on abelian surfaces, threefolds, and fourfolds. The following theorems identify which coefficients yield Weil $p$-polynomials. We have cited the sources we referenced, but note that the dimension 3 case is also addressed in \cite{Xing3and4}.

For abelian surfaces:
\begin{lemma}[Lemma 2.1, \cite{MaisnerNart}]\label{ordASWeilpoly} Let $f(x)\in \ZZ[x]$ be a polynomial of degree four. The following are equivalent:
\begin{itemize}
\item $f(x) = \left(x-\pi\right)\left(x-\frac{q}{\pi}\right)\left(x-\pi'\right)\left(x-\frac{q}{\pi'}\right),$ with $\pi$ and $\pi'$ Weil $q$-numbers.
\vspace{.1 in}
\item $f(x)=(x^2-\beta_1x+q)(x^2-\beta_2x+q),$ with $\beta_i\in\RR$ and $|\beta_i|\leq 2\sqrt{q}$.
\vspace{.1 in}
\item  $f(x)=x^4+a_3x^3+a_2x^2+qa_3x+q^2$ with $|a_3|\leq 4\sqrt{q}$ and\\ 
$2|a_3|\sqrt{q}-2q\leq a_2\leq \frac{a_3^2}{4}+2q$.
\end{itemize}
\end{lemma}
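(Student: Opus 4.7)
The plan is to establish the equivalences as $(1) \Leftrightarrow (2)$ and $(2) \Leftrightarrow (3)$, with $(2)$ acting as the convenient intermediate factorization.

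For $(1) \Leftrightarrow (2)$, the key observation is that if $\pi$ is a non-real Weil $q$-number, then $|\pi|^2 = \pi\bar\pi = q$, so complex conjugation sends $\pi$ to $q/\pi$. Therefore the pairing in (1) groups each root with its complex conjugate, and I would compute
\[(x-\pi)(x-q/\pi) = x^2 - (\pi + \bar\pi)x + q = x^2 - 2\operatorname{Re}(\pi)\,x + q,\]
setting $\beta = 2\operatorname{Re}(\pi)$. The bound $|\beta| \leq 2|\pi| = 2\sqrt{q}$ is immediate. For the converse, I would observe that a real quadratic $x^2 - \beta x + q$ with $|\beta| \leq 2\sqrt{q}$ has nonpositive discriminant, so its roots are either complex conjugates of modulus $\sqrt{q}$ (hence Weil $q$-numbers with the required reciprocal structure) or, in the boundary case $\beta = \pm 2\sqrt{q}$, the double root $\pm\sqrt{q}$ itself a Weil $q$-number.

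For $(2) \Leftrightarrow (3)$, I would first simply multiply out
\[(x^2 - \beta_1 x + q)(x^2 - \beta_2 x + q) = x^4 - (\beta_1+\beta_2)x^3 + (\beta_1\beta_2 + 2q)x^2 - q(\beta_1+\beta_2)x + q^2,\]
which automatically produces a polynomial of the $q$-reciprocal shape in (3), with $a_3 = -(\beta_1+\beta_2)$ and $a_2 = \beta_1\beta_2 + 2q$. So $\beta_1$ and $\beta_2$ are precisely the roots of $t^2 + a_3 t + (a_2 - 2q)$. The real substance lies in translating the constraint \emph{both $\beta_i$ are real and lie in $[-2\sqrt{q}, 2\sqrt{q}]$} into inequalities on $a_2, a_3$. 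I would use three standard root-location conditions on the quadratic $p(t) = t^2 + a_3 t + (a_2-2q)$: its discriminant must be nonnegative, giving $a_2 \leq a_3^2/4 + 2q$; its vertex $-a_3/2$ must lie in $[-2\sqrt{q}, 2\sqrt{q}]$, giving $|a_3| \leq 4\sqrt{q}$; and $p(\pm 2\sqrt{q}) \geq 0$, which combines into $a_2 \geq 2|a_3|\sqrt{q} - 2q$. Reversing these is purely algebraic, once one notes that these three conditions are jointly equivalent to both roots being real and in $[-2\sqrt{q}, 2\sqrt{q}]$.

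I expect the only mildly subtle step is the lower bound on $a_2$: one has to package the two conditions $p(2\sqrt{q}) \geq 0$ and $p(-2\sqrt{q}) \geq 0$ into the single inequality with $|a_3|$, which amounts to the elementary observation that on the interval $[-2\sqrt{q},2\sqrt{q}]$, for fixed sum $\beta_1+\beta_2 = -a_3$, the product $\beta_1\beta_2$ is minimized by pushing one root to the nearest endpoint, yielding the value $2|a_3|\sqrt{q} - 4q$. Everything else is either direct expansion or a discriminant computation, so this is the main (and only modest) obstacle.
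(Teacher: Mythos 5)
Your argument is correct and complete: the pairing of each Weil number with its conjugate $q/\pi$ gives $(1)\Leftrightarrow(2)$, and the three root-location conditions on $t^2+a_3t+(a_2-2q)$ (nonnegative discriminant, vertex in $[-2\sqrt{q},2\sqrt{q}]$, nonnegativity at the endpoints) translate exactly into the stated bounds on $a_3$ and $a_2$. Note that the paper itself gives no proof --- it simply cites Maisner--Nart --- so there is nothing to compare against; your verification is the standard one and matches what that reference does. The only point worth tightening is the converse of $(1)\Leftarrow(2)$: to conclude the roots are genuinely Weil $q$-numbers you should remark that, since $f\in\ZZ[x]$ is monic and \emph{every} root of $f$ has modulus $\sqrt{q}$, each root is an algebraic integer all of whose conjugates (being among the roots of $f$) have modulus $\sqrt{q}$, which is what the definition requires for all embeddings, not just one.
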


Maisner and Nart note that the bound on $a_3$ can be improved to $2\left\lfloor 2\sqrt{q}\right\rfloor$. They also note that the above bounds result from the work in \cite{Ruck} and \cite{Xing1994}. 

For threefolds:
\begin{theorem}[Theorem 1.1, \cite{HalouiDim3}]\label{WeilDim3}
Let $f(x) = x^6+a_5x^5+a_4x^4+a_3x^3+qa_4x^2+q^2a_5x+q^3$ be a polynomial with integer coefficients. Then $f(x)$ is a Weil $q$-polynomial if and only if either 
\[f(x) = \left(x^2-q\right)^2\left(x^2+\beta x+ q\right),\]
where $\beta\in \ZZ$ and $|\beta|<2\sqrt{q}$, or the following conditions hold
\begin{enumerate}
\item $|a_5|<6\sqrt{q}$,
\item $4\sqrt{q} \cdot \abs{a_5}-9q<a_4\leq \frac{a_5^2}{3}+3q$,
\vspace{.05 in}
\item $-\frac{2a_5^3}{27}+\frac{a_5a_4}{3}+qa_5-\frac{2}{27}\left(a_5^2-3a_4 9q\right)^{\frac{3}{2}}\leq a_3 \leq -\frac{2a_5^3}{27}+\frac{a_5a_4}{3}+qa_5+\frac{2}{27}\left(a_5^2-3a_4 9q\right)^{\frac{3}{2}},$
\vspace{.05 in}
\item $-2qa_5-2\sqrt{q} a_4-2q\sqrt{q}<a_3<-2qa_5+2\sqrt{q}a_4+2q\sqrt{q}.$
\end{enumerate}
\end{theorem}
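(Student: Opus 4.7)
The plan is to exploit the Weil $q$-reciprocality of $f$ via a Chebyshev-type substitution that reduces the degree-$6$ problem to a degree-$3$ real root problem, then translate ``three real roots in $[-2\sqrt{q},2\sqrt{q}]$'' into explicit inequalities on $a_5$, $a_4$, $a_3$.

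First I would separate the case where $f$ has real roots. By Lemma \ref{reciprocality} the roots pair as $\pi\leftrightarrow q/\pi$, and the only real Weil $q$-numbers are $\pm\sqrt{q}$. Comparing $f(0)=q^3$ to the product of the roots shows that $(x^2-q)=(x-\sqrt{q})(x+\sqrt{q})$ must divide $f$ an even number of times, since each such factor contributes $-q$ to the product of roots while each complex-conjugate pair or repeated real block $(x\mp\sqrt{q})^2$ contributes $+q$. Because $\deg f=6$ this multiplicity is either $0$ or $2$; the value $2$ forces the special form $f(x)=(x^2-q)^2(x^2+\beta x+q)$, where the cofactor is a degree-$2$ Weil polynomial and hence has $\beta\in\ZZ$ with $|\beta|<2\sqrt{q}$. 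Otherwise $f(x)=\prod_{i=1}^3(x^2-\beta_i x+q)$ with $\beta_i\in\RR$ and $|\beta_i|\leq 2\sqrt{q}$.

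Next I would apply the substitution $y=x+q/x$. Using $x^2+q^2/x^2=y^2-2q$ and $x^3+q^3/x^3=y^3-3qy$, the $q$-symmetric Laurent polynomial $f(x)/x^3$ collapses to $g(y):=y^3+a_5 y^2+(a_4-3q)y+(a_3-2qa_5)$, so that $f(x)=x^3 g(y)$ and the roots of $g$ are precisely the $\beta_i$. The problem becomes: when does the integer cubic $g$ have three real roots in $[-2\sqrt{q},2\sqrt{q}]$? Note that $y=x+q/x$ with $|x|=\sqrt{q}$ forces $y\in[-2\sqrt{q},2\sqrt{q}]$, and conversely each real $y_0$ in that interval produces a complex-conjugate pair $x$ of modulus $\sqrt{q}$.

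Finally I would convert this root-location problem into the claimed inequalities. Bound $(1)$ is immediate from $|a_5|=|\beta_1+\beta_2+\beta_3|<6\sqrt{q}$. The upper half of $(2)$ is the nonnegativity of $D:=a_5^2-3a_4+9q=\tfrac12\sum_{i<j}(\beta_i-\beta_j)^2$, i.e.\ Newton's inequality for real $\beta_i$. For $(3)$, I would evaluate $g$ at its critical points $y_\pm=(-a_5\pm\sqrt{D})/3$: using $g'(y_\pm)=0$ to reduce $y_\pm^3$ and $y_\pm^2$ yields the closed form $g(y_\pm)=\tfrac{1}{27}\bigl(2a_5^3-9a_5(a_4-3q)+27(a_3-2qa_5)\mp 2D^{3/2}\bigr)$, and the three-real-roots criterion ``local max $\geq 0$, local min $\leq 0$'' is then precisely $(3)$. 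Condition $(4)$ follows from direct evaluation of $g(\pm 2\sqrt{q})$ and the sign requirements $g(2\sqrt{q})\geq 0$, $g(-2\sqrt{q})\leq 0$. The lower half of $(2)$ arises as the compatibility condition that the $a_3$-interval given by $(3)$ is nonempty and sits inside the one given by $(4)$. Sufficiency then runs in reverse: conditions $(1)$--$(4)$ force $g$ to have three real roots in the required interval, each producing a real quadratic factor of $f$ with roots of modulus $\sqrt{q}$. I expect the most delicate step to be the sign and boundary analysis for $(3)$, where the $D^{3/2}$ expression must be shown to genuinely \emph{characterize} (and not merely be necessary for) three real roots, and the strict vs.\ weak inequalities of the statement must be reconciled with the edge cases where some $\beta_i$ approaches $\pm 2\sqrt{q}$.
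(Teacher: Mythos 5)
The paper does not actually prove this statement: it is quoted (with a typographical loss of a ``$+$'' --- the quantity $a_5^2-3a_4 9q$ in condition (3) should read $a_5^2-3a_4+9q$, as you correctly infer) from Haloui's Theorem~1.1, whose proof proceeds exactly along the lines you propose. The substitution $y=x+q/x$ turns $f(x)/x^3$ into the real cubic $g(y)=y^3+a_5y^2+(a_4-3q)y+(a_3-2qa_5)$, and the Weil condition becomes the statement that $g$ has all three roots in $[-2\sqrt q,2\sqrt q]$. Your computations of $g$, of the critical values $g(y_\pm)=\tfrac1{27}\bigl(2a_5^3-9a_5(a_4-3q)+27(a_3-2qa_5)\mp 2D^{3/2}\bigr)$, and of $g(\pm2\sqrt q)$ all check out and do yield condition (3), condition (4), and the upper half of (2).

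The gap is in the sufficiency direction, specifically your account of the lower half of (2). Conditions (1), $D\ge0$, (3) and (4) together do \emph{not} force the roots of $g$ into $[-2\sqrt q,2\sqrt q]$: for the cubic with roots $-\sqrt q,\ \tfrac52\sqrt q,\ \tfrac52\sqrt q$ one has $|a_5|=4\sqrt q<6\sqrt q$, three real roots, $g(2\sqrt q)>0$ and $g(-2\sqrt q)<0$, yet two roots lie outside the interval; here $a_4=\tfrac{17}{4}q$ while $4\sqrt q\,|a_5|-9q=7q$, so only the lower half of (2) detects the failure. The correct role of that inequality is not ``compatibility of the $a_3$-intervals'' but localization of the critical points: together with (1) it is equivalent to $\sqrt D<6\sqrt q-|a_5|$, i.e.\ $-2\sqrt q<y_-\le y_+<2\sqrt q$, and only then do the endpoint sign conditions (4), combined with the monotonicity of $g$ outside $[y_-,y_+]$, pin all three roots strictly inside the interval. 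Separately, be aware that your real-root dichotomy (multiplicity of $x^2-q$ equal to $0$ or $2$, with $2$ forcing the special form) is airtight only when $\sqrt q\notin\ZZ$; when $q$ is a perfect square, polynomials such as $(x-2)^2(x+2)^4$ over $\FF_4$ are Weil $q$-polynomials of the prescribed palindromic shape that fall into neither branch as stated, so the edge-case bookkeeping you defer at the end (where some $\beta_i=\pm2\sqrt q$) is exactly where the strict inequalities carry real content.
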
 

For each irreducible polynomial that also satisfies the above, we are able to compute all $n$ up to a reasonable bound for which $p$ is a common index divisor for $\QQ(\hat{A}[n])$, where $\hat{A}$ is any abelian variety over $\QQ$ whose reduction at $p$ has the given Weil $p$-polynomial and the minimal endomorphism ring.

The following tables list some $n$ for which various small primes are common index divisors of $\QQ(\hat{A}[n])/\QQ$ for dimensions 2, 3, and 4. For dimension 4, we used the LMFDB to obtain the relevant Weil 2-polynomials.
When $\dim(\hat{A})=3$, there are 80 irreducible Weil $2$-polynomials, 348 irreducible Weil 3-polynomials, and 2,032 irreducible Weil 5-polynomials. These numbers increase with dimension, so we have we have abbreviated some tables. More data and the SageMath code used can be found on the webpage \href{https://math.colorado.edu/~hwsmith/CodeNonMonoAVs}{https://math.colorado.edu/$\sim$hwsmith/CodeNonMonoAVs}.


\begin{remark}\label{Rmk: prankandNewton}
In the case of elliptic curves, we are able to show in a precise way that supersingular primes are an obstruction to monogeneity. From the tables below we can see that a direct analogue (attempting to replace supersingularity with $p$-rank 0 or a specific shape of the Newton polygon) does not appear to hold. Instead, more coefficients of the Weil $p$-polynomial being zero leads to more common index divisors. Indeed, if all the $a_i=0$, the matrix $\sigma_p$ is only a permutation away from being a diagonal matrix. The order of a diagonal matrix in $\GSp_{2g}(\ZZ/n\ZZ)$ is bounded by $|\ZZ/n\ZZ^*|=\phi(n)$. We see the order of $\sigma_p$ is relatively small in this case and it becomes more likely that there are not enough irreducible polynomials of this degree in $\FF_p[x]$ to accommodate the splitting of $p$ in the $n^\text{th}$ division field. 
\end{remark}

\newpage 


\begin{table}[h!]
\centering
\begin{adjustbox}{width=5.4 in,center}
\begin{tabular}{ | m{.5cm} | m{.5cm}| m{1.2 cm}| m{10.5cm} | }
\hline
$a_3$ & $a_2$ & $p$-rank & non-monogenic $n$ \\
\hline
\hline
-3 & 5 & 2 & 3, 19, 31, 57, 61, 93, 171, 183, 589, 981 \\ \hline
-2 & 2 & 0 & 5, 7, 9, 13, 15, 21, 35, 37, 39, 45, 51, 61, 63, 65, 85, 91, 105, 109, 111, 117, 119, 133, 135, 153, 171, 185, 189, 195, 205, 219, 221, 241, 247, 255, 259, 273, 285, 305, 315, 325, 327, 333, 351, 357, 365, 377, 399, 455, 481, 485, 511, 513, 533, 545, 555, 565, 585, 595, 657, 663, 665, 673, 679, 703, 723, 741, 763, 765, 771, 777, 793, 819, 855, 873, 945, 949, 981, 999 \\ \hline
-2 & 3 & 2 & 7, 47 \\ \hline
-1 & -1 & 2 & 5, 9, 11, 15, 23, 37, 43, 45, 67, 111, 127, 135, 151, 185, 203, 301, 333, 555, 999 \\ \hline
-1 & 0 & 1 & 47 \\ \hline
-1 & 1 & 2 & 3, 9, 103, 127 \\ \hline
-1 & 3 & 2 & 5, 15, 59 \\ \hline
0 & -3 & 2 & 3, 5, 9, 11, 15, 23, 29, 33, 37, 45, 53, 87, 111, 135, 137, 185, 203, 233, 281, 301, 333, 555, 999 \\ \hline
0 & -2 & 0 & 3, 5, 7, 9, 11, 13, 15, 19, 21, 27, 33, 35, 39, 43, 45, 51, 57, 63, 65, 67, 73, 77, 81, 85, 91, 93, 99, 105, 109, 111, 117, 119, 129, 133, 135, 151, 153, 171, 185, 189, 195, 201, 217, 219, 221, 231, 241, 247, 255, 259, 273, 279, 285, 301, 315, 327, 331, 333, 337, 341, 351, 357, 365, 381, 387, 399, 441, 453, 455, 481, 485, 511, 513, 545, 555, 585, 595, 603, 627, 651, 657, 663, 665, 673, 679, 683, 693, 703, 723, 741, 763, 765, 771, 777, 819, 855, 873, 889, 903, 921, 945, 949, 981, 993, 999 \\ \hline
0 & -1 & 2 & 3, 17, 19, 23, 31, 57, 61, 93, 171, 183, 229, 589, 981 \\ \hline
0 & 1 & 2 & 3, 9, 17, 19, 23, 47, 57, 61, 69, 93, 171, 183, 229, 981 \\ \hline
0 & 2 & 0 & 3, 5, 7, 9, 13, 15, 19, 21, 27, 31, 35, 39, 45, 49, 51, 57, 63, 65, 73, 77, 85, 89, 91, 93, 99, 105, 109, 111, 117, 119, 127, 133, 135, 151, 153, 161, 171, 185, 189, 195, 217, 219, 221, 231, 241, 247, 255, 259, 273, 279, 285, 301, 315, 327, 331, 333, 337, 341, 351, 357, 365, 381, 387, 399, 441, 453, 455, 481, 485, 511, 513, 545, 553, 555, 585, 595, 623, 631, 651, 657, 663, 665, 673, 679, 693, 703, 721, 723, 741, 763, 765, 771, 777, 819, 855, 873, 889, 903, 945, 949, 981, 993, 999 \\ \hline
1 & -1 & 2 & 5, 7, 9, 11, 15, 37, 43, 45, 67, 79, 111, 135, 185, 203, 301, 333, 555, 999 \\ \hline
1 & 0 & 1 & 47 \\ \hline
1 & 1 & 2 & 3, 9 \\ \hline
1 & 3 & 2 & 5, 15, 59 \\ \hline
2 & 2 & 0 & 5, 7, 9, 13, 15, 21, 35, 37, 39, 45, 51, 61, 63, 65, 85, 91, 105, 109, 111, 117, 119, 133, 135, 153, 171, 185, 189, 195, 205, 219, 221, 241, 247, 255, 259, 273, 285, 305, 315, 325, 327, 333, 351, 357, 365, 377, 399, 455, 481, 485, 511, 513, 533, 545, 555, 565, 585, 595, 657, 663, 665, 673, 679, 703, 723, 741, 763, 765, 771, 777, 793, 819, 855, 873, 945, 949, 981, 999 \\ \hline
2 & 3 & 2 & 7, 47 \\ \hline
3 & 5 & 2 & 3, 19, 31, 57, 61, 93, 171, 183, 589, 981 \\ \hline
\end{tabular}
\end{adjustbox}
\vspace{.05 in}
\caption{$n<1000$ where 2 is a common index divisor in $\QQ(\hat A[n])$ with $\dim \hat{A}=2$}
\label{p=2Dim=2}
\end{table}


\begin{table}[h!]
\centering
\begin{adjustbox}{width=5.5 in,center}
\begin{tabular}{ | m{.5cm} | m{.5cm}| m{1.2cm}| m{12cm} | }
\hline
$a_3$ & $a_2$ & $p$-rank & non-monogenic $n$ \\
\hline
\hline
-4 & 8 & 2 & 2, 4, 5, 8, 10, 16, 17, 20, 32, 34, 40, 68, 73, 82, 88, 136, 140, 146, 164, 170, 194, 238, 272, 280, 328, 340, 365, 388, 451, 476, 485 \\ \hline
-3 & 5 & 2 & 2, 4, 29, 488 \\ \hline
-3 & 7 & 2 & 2, 4, 10, 19, 20, 40, 44, 80 \\ \hline
-2 & 1 & 2 & 2, 4, 13, 19, 20, 38, 40, 76, 86, 136, 140, 170, 194, 238, 272, 280, 340, 388, 476, 485 \\ \hline
-2 & 2 & 2 & 2, 4, 8, 10, 14, 28, 29, 58, 82, 164, 205, 410 \\ \hline
-2 & 4 & 2 & 2, 4, 8, 23 \\ \hline
-2 & 5 & 2 & 2, 4, 23 \\ \hline
-1 & -2 & 2 & 2, 4, 5, 8, 10, 13, 16, 20, 31, 37, 40, 62, 74, 80, 124, 140, 148, 160, 185, 224, 280, 296, 370 \\ \hline
-1 & -1 & 2 & 2, 4, 23 \\ \hline
-1 & 1 & 2 & 2, 4, 5, 80, 82, 100, 164, 200, 205, 328, 400, 410 \\ \hline
-1 & 2 & 2 & 2, 4, 8, 16 \\ \hline
-1 & 3 & 1 & 2, 4, 5, 17 \\ \hline
-1 & 5 & 2 & 2, 4, 29, 31, 62, 124 \\ \hline
0 & -5 & 2 & 2, 4, 5, 8, 10, 16, 20, 25, 37, 40, 74, 80, 115, 140, 148, 160, 185, 224, 265, 280, 296, 335, 370 \\ \hline
0 & -4 & 2 & 2, 4, 7, 8, 14, 16, 28, 32, 56, 82, 164, 205, 328, 410 \\ \hline
0 & -2 & 2 & 2, 4, 8, 11, 17, 20, 22, 34, 68, 82, 136, 140, 164, 170, 194, 238, 272, 280, 340, 388, 433, 451, 476, 485 \\ \hline
0 & -1 & 2 & 2, 4, 8, 11, 13, 16, 26, 29, 32, 52, 103, 104, 208, 416 \\ \hline
0 & 0 & 0 & 2, 4, 5, 8, 10, 13, 14, 16, 20, 25, 26, 28, 29, 32, 34, 35, 40, 41, 50, 52, 56, 58, 61, 64, 65, 68, 70, 73, 80, 82, 85, 88, 91, 100, 104, 110, 112, 121, 122, 130, 136, 140, 145, 146, 160, 164, 170, 176, 182, 193, 200, 205, 208, 220, 224, 242, 244, 260, 265, 272, 275, 280, 287, 290, 292, 305, 320, 328, 340, 364, 365, 386, 400, 410, 416, 440, 455, 464, 484, 488 \\ \hline
0 & 1 & 2 & 2, 4, 8, 11, 13, 16, 26, 29, 32, 44, 52, 88, 104, 208, 416 \\ \hline
0 & 3 & 0 & 2, 4, 7, 8, 13, 14, 16, 20, 26, 28, 35, 37, 40, 52, 56, 65, 70, 73, 74, 76, 80, 91, 104, 112, 121, 130, 133, 140, 143, 146, 148, 152, 160, 164, 182, 205, 208, 224, 242, 247, 259, 260, 266, 280, 286, 287, 292, 296, 328, 364, 365, 410, 416, 427, 433, 455, 481, 484, 488, 494 \\ \hline
0 & 4 & 2 & 2, 4, 7, 8, 14, 16, 23, 28, 32, 56, 82, 164, 205, 328, 410 \\ \hline
1 & -2 & 2 & 2, 4, 5, 8, 10, 16, 20, 31, 37, 40, 62, 74, 80, 124, 140, 148, 160, 181, 185, 224, 280, 296, 370 \\ \hline
1 & -1 & 2 & 2, 4, 23 \\ \hline
1 & 1 & 2 & 2, 4, 5, 80, 82, 100, 164, 200, 205, 328, 400, 410 \\ \hline
1 & 2 & 2 & 2, 4, 8, 16 \\ \hline
1 & 3 & 1 & 2, 4, 5, 17 \\ \hline
1 & 5 & 2 & 2, 4, 29, 31, 62, 124 \\ \hline
2 & 1 & 2 & 2, 4, 11, 19, 20, 38, 40, 76, 86, 136, 140, 170, 194, 238, 272, 280, 340, 388, 476, 485 \\ \hline
2 & 2 & 2 & 2, 4, 8, 10, 14, 28, 29, 58, 82, 164, 205, 410 \\ \hline
2 & 4 & 2 & 2, 4, 8, 23 \\ \hline
2 & 5 & 2 & 2, 4, 23 \\ \hline
3 & 5 & 2 & 2, 4, 29, 488 \\ \hline
3 & 7 & 2 & 2, 4, 10, 11, 19, 20, 22, 40, 44, 80 \\ \hline
4 & 8 & 2 & 2, 4, 5, 8, 10, 16, 17, 20, 32, 34, 40, 68, 73, 82, 88, 136, 140, 146, 164, 170, 194, 238, 272, 280, 328, 340, 365, 388, 451, 476, 485 \\ \hline
\end{tabular}
\end{adjustbox}
\vspace{.05 in}
\caption{$n<500$ where 3 is a common index divisor in $\QQ(\hat A[n])$ with $\dim \hat{A}=2$}
\label{p=3Dim=2}
\end{table}


\begin{table}[h!]
\centering
\begin{adjustbox}{width=6.3 in,center}
\begin{tabular}{ | m{.5cm} | m{.5cm}| m{1.2cm}| m{15cm} | }
\hline
$a_3$ & $a_2$ & $p$-rank & non-monogenic $n$ \\
\hline
\hline
-6 & 17 & 2 & 7, 12, 14, 21, 28, 42, 56, 84, 168, 217, 252, 434 \\ \hline
-5 & 15 & 0 & 11, 22, 24, 33, 44, 66, 71, 88, 124, 132, 142, 181, 213, 264, 284, 341, 362, 426, 429, 451, 492 \\ \hline
-4 & 8 & 2 & 2, 4, 8, 16, 42, 56, 84, 168, 377 \\ \hline
-4 & 11 & 2 & 8, 13, 26, 39, 48, 52, 72, 78, 104, 117, 122, 144, 156, 208, 234, 296, 312, 468, 481 \\ \hline
-3 & 4 & 2 & 3, 6, 9, 12, 18, 27, 31, 36, 54, 72, 93, 186, 279, 333, 444 \\ \hline
-3 & 8 & 2 & 12, 13, 19, 26, 38, 39, 42, 52, 56, 78, 84, 91, 104, 156, 168, 182, 228, 234, 247, 273, 312, 342, 364, 468, 494 \\ \hline
-2 & -1 & 2 & 13, 26, 39, 52, 72, 78, 104, 117, 156, 234, 312, 468, 481 \\ \hline
-2 & 4 & 2 & 2, 4, 6, 12, 24 \\ \hline
-1 & -4 & 2 & 7, 14, 28, 42, 56, 63, 72, 84, 126, 154, 168, 252 \\ \hline
-1 & 9 & 2 & 8, 24, 48 \\ \hline
0 & -9 & 2 & 7, 8, 14, 16, 17, 28, 42, 48, 51, 56, 63, 72, 84, 96, 112, 122, 126, 144, 168, 224, 252, 288, 336 \\ \hline
0 & -8 & 2 & 2, 3, 4, 8, 16, 17, 32, 34, 39, 41, 48, 52, 68, 78, 82, 104, 136, 156, 208, 272, 312 \\ \hline
0 & -7 & 2 & 4, 8, 12, 13, 16, 19, 24, 26, 38, 39, 42, 48, 52, 56, 76, 78, 84, 91, 96, 104, 112, 152, 156, 168, 182, 208, 224, 228, 234, 247, 273, 304, 312, 336, 342, 364, 416, 456, 468, 494 \\ \hline
0 & -5 & 0 & 3, 6, 7, 8, 9, 12, 14, 18, 21, 24, 27, 28, 31, 36, 39, 42, 48, 52, 54, 56, 62, 63, 72, 78, 84, 91, 93, 104, 112, 117, 122, 124, 126, 144, 156, 168, 182, 183, 186, 189, 208, 217, 228, 234, 248, 252, 266, 273, 279, 312, 336, 342, 364, 366, 372, 378, 399, 403, 427, 434, 456, 468, 496 \\ \hline
0 & -4 & 2 & 2, 4, 8, 9, 11, 16, 18, 22, 72, 136, 272, 341, 342 \\ \hline
0 & -3 & 2 & 4, 8, 48, 72, 144 \\ \hline
0 & -2 & 2 & 2, 3, 4, 7, 11, 14, 21, 22, 42, 84 \\ \hline
0 & -1 & 2 & 8, 12, 16, 24, 48, 96, 296, 333, 444 \\ \hline
0 & 0 & 0 & 2, 4, 6, 8, 12, 13, 16, 24, 26, 32, 39, 41, 42, 48, 52, 56, 62, 63, 72, 78, 82, 84, 91, 93, 96, 104, 112, 117, 124, 126, 136, 144, 156, 168, 182, 186, 192, 204, 208, 217, 221, 234, 248, 252, 272, 273, 279, 312, 313, 336, 364, 372, 403, 408, 416, 426, 429, 434, 442, 451, 468, 481, 492, 496 \\ \hline
0 & 2 & 2 & 2, 4, 7, 14, 21, 22, 42, 84, 217 \\ \hline
0 & 3 & 2 & 8, 19, 29, 38, 48, 72, 144 \\ \hline
0 & 4 & 2 & 2, 3, 4, 8, 9, 16, 18, 19, 22, 27, 57, 72, 136, 171, 272, 342 \\ \hline
0 & 5 & 0 & 4, 7, 8, 9, 12, 14, 18, 19, 21, 24, 28, 31, 36, 38, 39, 42, 48, 52, 56, 62, 63, 72, 76, 78, 84, 91, 93, 104, 112, 117, 124, 126, 142, 144, 156, 168, 181, 182, 186, 208, 217, 228, 234, 248, 252, 266, 273, 279, 284, 312, 336, 341, 342, 362, 364, 372, 378, 399, 403, 434, 456, 468, 496 \\ \hline
0 & 7 & 2 & 3, 6, 8, 11, 12, 13, 16, 18, 24, 26, 39, 42, 48, 52, 56, 78, 84, 91, 96, 104, 112, 156, 168, 182, 208, 224, 228, 234, 247, 273, 304, 312, 336, 342, 364, 416, 456, 468, 494 \\ \hline
0 & 8 & 2 & 2, 4, 8, 16, 17, 32, 34, 39, 41, 48, 52, 68, 78, 82, 104, 136, 156, 208, 272, 312 \\ \hline
1 & -4 & 2 & 7, 11, 14, 28, 42, 56, 63, 72, 84, 126, 154, 168, 252 \\ \hline
1 & -1 & 2 & 19, 22, 44, 88 \\ \hline
1 & 9 & 2 & 4, 8, 24, 48 \\ \hline
2 & -1 & 2 & 13, 26, 39, 52, 72, 78, 104, 117, 156, 234, 312, 468, 481 \\ \hline
2 & 4 & 2 & 2, 4, 6, 12, 24 \\ \hline
3 & 4 & 2 & 3, 6, 9, 12, 18, 27, 36, 54, 72, 93, 186, 279, 333, 444 \\ \hline
3 & 8 & 2 & 12, 13, 19, 26, 38, 39, 42, 52, 56, 78, 84, 91, 104, 156, 168, 182, 228, 234, 247, 273, 312, 342, 364, 468, 494 \\ \hline
4 & 8 & 2 & 2, 4, 8, 16, 42, 56, 84, 168, 377 \\ \hline
4 & 11 & 2 & 8, 13, 26, 39, 48, 52, 72, 78, 104, 109, 117, 122, 144, 156, 208, 234, 296, 312, 468, 481 \\ \hline
5 & 15 & 0 & 11, 22, 24, 33, 44, 66, 71, 88, 124, 132, 142, 181, 213, 264, 284, 341, 362, 426, 429, 451, 492 \\ \hline
6 & 17 & 2 & 7, 12, 14, 21, 28, 42, 56, 84, 168, 217, 252, 434 \\ \hline
\end{tabular}
\end{adjustbox}
\vspace{.05 in}
\caption{$n<500$ for selected $a_3$ and $a_2$ where 5 is a common index divisor in $\QQ(\hat A[n])$ with $\dim \hat{A}=2$}
\label{p=5Dim=2}
\end{table}


\begin{table}[h!]
\centering
\begin{adjustbox}{width=6.5 in,center}
\begin{tabular}{ | m{.5cm} | m{.5cm}| m{.7cm}| m{1.5 cm}| m{5.5cm} ||m{.5cm} | m{.5cm}| m{.7cm}| m{1.5 cm}| m{2.8cm} | }
\hline
$a_5$ & $a_4$ & $a_3$ & $p$-rank & non-monogenic $n$ & $a_5$ & $a_4$ & $a_3$ & $p$-rank & non-monogenic $n$\\
\hline
\hline
-4 & 9 & -15 & 3 & 7, 11, 23, 29, 43, 71, 87, 113, 127 & 0 & 1 & -3 & 3 & 3, 9 \\ \hline
-3 & 2 & 1 & 3 & 7, 11, 29, 43, 71, 87, 113, 127 & 0 & 1 & -1 & 3 &  \\ \hline
-3 & 5 & -7 & 3 &  & 0 & 1 & 1 & 3 &  \\ \hline
-3 & 6 & -9 & 3 & 3, 9, 27, 153 & 0 & 1 & 3 & 3 & 3, 9 \\ \hline
-2 & 0 & 3 & 3 & 107, 149 & 0 & 2 & -2 & 0 &  \\ \hline
-2 & 1 & 0 & 2 & 3, 5, 11, 55, 83 & 0 & 2 & -1 & 3 & 7 \\ \hline
-2 & 1 & 1 & 3 & 3 & 0 & 2 & 1 & 3 & 7 \\ \hline
-2 & 2 & -2 & 0 & 3, 9 & 0 & 2 & 2 & 0 &  \\ \hline
-2 & 2 & -1 & 3 & 3 & 0 & 3 & -1 & 3 & 3, 89, 153 \\ \hline
-2 & 3 & -6 & 2 & 73 & 0 & 3 & 1 & 3 & 3, 7, 89, 153 \\ \hline
-2 & 3 & -5 & 3 & 3, 9, 27, 59, 63 & 1 & -1 & -5 & 3 & 3, 9 \\ \hline
-2 & 3 & -3 & 3 & 5, 83, 131 & 1 & -1 & -4 & 2 & 3, 7, 49 \\ \hline
-2 & 4 & -6 & 0 & 3 & 1 & -1 & -2 & 2 & 3, 9 \\ \hline
-2 & 5 & -7 & 3 & 3, 7 & 1 & 0 & -3 & 3 & 7, 77, 103 \\ \hline
-1 & -1 & 2 & 2 & 3, 9 & 1 & 0 & -1 & 3 & 5, 15 \\ \hline
-1 & -1 & 4 & 2 & 3, 7 & 1 & 0 & 0 & 1 &  \\ \hline
-1 & -1 & 5 & 3 & 3, 9 & 1 & 0 & 1 & 3 & 3 \\ \hline
-1 & 0 & -1 & 3 & 3 & 1 & 1 & 0 & 2 & 3, 7 \\ \hline
-1 & 0 & 0 & 1 &  & 1 & 1 & 1 & 3 & 3 \\ \hline 
-1 & 0 & 1 & 3 & 5, 15 & 1 & 1 & 3 & 3 & 3 \\ \hline
-1 & 0 & 3 & 3 & 7, 77, 103 & 1 & 2 & 1 & 3 & 3, 5, 7, 9, 35, 75 \\ \hline
-1 & 1 & -3 & 3 & 3 & 1 & 2 & 2 & 1 & 3 \\ \hline
-1 & 1 & -1 & 3 & 3 & 1 & 2 & 3 & 3 & 3 \\ \hline
-1 & 1 & 0 & 2 & 3, 7 & 1 & 2 & 5 & 3 & 3, 5 \\ \hline
-1 & 2 & -5 & 3 & 3, 5 & 1 & 3 & 3 & 3 & 5, 25 \\ \hline
-1 & 2 & -3 & 3 & 3 & 1 & 4 & 3 & 3 & 3, 83, 127 \\ \hline
-1 & 2 & -2 & 1 & 3 & 2 & 0 & -3 & 3 & 107, 149 \\ \hline
-1 & 2 & -1 & 3 & 3, 5, 7, 9, 35, 75 & 2 & 1 & -1 & 3 & 3 \\ \hline
-1 & 3 & -3 & 3 & 5, 25 & 2 & 1 & 0 & 2 & 3, 5, 11, 55, 83 \\ \hline
-1 & 4 & -3 & 3 & 3, 83, 127 & 2 & 2 & 1 & 3 & 3 \\ \hline
0 & -1 & -2 & 2 & 73 & 2 & 2 & 2 & 0 & 3, 9 \\ \hline
0 & -1 & -1 & 3 &  & 2 & 3 & 3 & 3 & 5, 83, 131 \\ \hline
0 & -1 & 1 & 3 &  & 2 & 3 & 5 & 3 & 3, 9, 27, 59, 63 \\ \hline
0 & -1 & 2 & 2 & 73 & 2 & 3 & 6 & 2 & 73 \\ \hline
0 & 0 & -3 & 3 & 3, 7, 9, 13, 15, 21, 27, 29, 31, 35, 39, 45, 63, 65, 87, 91, 93, 105, 117, 123, 141, 151, 195 & 2 & 4 & 6 & 0 & 3 \\ \hline
0 & 0 & -2 & 0 & 3, 7, 11, 15, 23, 29, 37, 45, 67, 71, 79 & 2 & 5 & 7 & 3 & 3, 7 \\ \hline
0 & 0 & -1 & 3 & 3, 5, 7, 15, 19, 21, 25, 35, 45, 63, 71, 75, 95, 97, 105, 123, 133 & 3 & 2 & -1 & 3 & 7, 11, 23, 29, 43, 71, 87, 113, 127 \\ \hline
0 & 0 & 1 & 3 & 3, 5, 7, 15, 19, 21, 25, 35, 45, 47, 49, 63, 75, 95, 97, 105, 123, 133 & 3 & 5 & 7 & 3 & 7 \\ \hline
0 & 0 & 2 & 0 & 3, 7, 11, 15, 23, 29, 37, 45, 67 & 3 & 6 & 9 & 3 & 3, 9, 27, 153 \\ \hline
0 & 0 & 3 & 3 & 3, 7, 9, 13, 15, 21, 27, 29, 31, 35, 39, 45, 47, 63, 65, 71, 87, 91, 93, 105, 117, 123, 141, 151, 195 & 4 & 9 & 15 & 3 & 7, 11, 29, 43, 71, 87, 113, 127 \\ \hline
\end{tabular}
\end{adjustbox}
\vspace{.5 cm}
\caption{$n<200$ where 2 is a common index divisor in $\QQ(\hat A[n])$, with $\dim \hat{A}=3$}
\label{p=2Dim=3Double}
\end{table}


\begin{table}[h!]
\centering
\begin{adjustbox}{width=5.7 in,center}
\begin{tabular}{ | m{.5cm} | m{.5cm}| m{.7cm}| m{1.2 cm}| m{8cm} | }
\hline
$a_5$ & $a_4$ & $a_3$ & $p$-rank & non-monogenic $n$ \\
\hline
\hline
0 & 0 & -9 & 0 & 2, 4, 7, 8, 13, 14, 19, 26, 28, 37, 38, 40, 52, 56, 65, 70, 73, 74, 76, 80, 91, 95, 104, 112, 122, 124, 130, 133, 140, 146, 148, 152, 182, 185, 190 \\ \hline
0 & 0 & -7 & 3 & 2, 4, 5, 7, 8, 10, 11, 13, 14, 17, 19, 20, 22, 25, 26, 28, 35, 38, 40, 49, 56, 70, 76, 80, 85, 91, 112, 119, 133, 140, 145, 154, 175 \\ \hline
0 & 0 & -6 & 0 & 2, 4, 8, 11, 13, 14, 16, 17, 19, 22, 26, 28, 34, 38, 40, 43, 44, 56, 68, 86, 112, 119, 136, 172 \\ \hline
0 & 0 & -5 & 3 & 2, 4, 5, 7, 8, 11, 13, 14, 17, 23, 28, 37, 40, 56, 61, 70, 74, 80, 85, 112, 124, 140, 148, 154 \\ \hline
0 & 0 & -4 & 3 & 2, 4, 8, 13, 14, 16, 26, 28, 32, 40, 52, 56, 65, 76, 80, 95, 104, 112, 116, 124, 130, 152, 190 \\ \hline
0 & 0 & -3 & 0 & 2, 4, 5, 8, 10, 13, 20, 25, 31, 35, 40, 47, 67, 80, 124, 155 \\ \hline
0 & 0 & -2 & 3 & 2, 4, 5, 7, 8, 10, 13, 14, 16, 17, 20, 25, 28, 34, 35, 40, 47, 50, 56, 68, 70, 80, 85, 100, 109, 136, 140, 160, 170 \\ \hline
0 & 0 & -1 & 3 & 2, 4, 8, 11, 13, 19, 22, 26, 38, 40, 52, 65, 76, 79, 80, 104, 122, 130, 143, 193 \\ \hline
0 & 0 & 1 & 3 & 2, 4, 8, 11, 13, 19, 23, 26, 38, 40, 47, 52, 65, 76, 79, 80, 104, 122, 130, 143, 193 \\ \hline
0 & 0 & 2 & 3 & 2, 4, 5, 7, 8, 10, 14, 16, 17, 20, 25, 28, 34, 35, 40, 50, 56, 68, 70, 80, 85, 100, 109, 136, 140, 160, 170 \\ \hline
0 & 0 & 3 & 0 & 2, 4, 5, 8, 10, 13, 20, 23, 25, 26, 31, 35, 40, 67, 80, 124, 155 \\ \hline
0 & 0 & 4 & 3 & 2, 4, 8, 11, 13, 14, 16, 26, 28, 32, 40, 47, 52, 56, 65, 76, 80, 95, 104, 112, 116, 124, 130, 152, 190 \\ \hline
0 & 0 & 5 & 3 & 2, 4, 5, 7, 8, 11, 13, 14, 17, 22, 26, 28, 37, 40, 56, 61, 70, 74, 80, 85, 109, 112, 124, 140, 148, 154 \\ \hline
0 & 0 & 6 & 0 & 2, 4, 8, 11, 13, 14, 16, 17, 19, 22, 26, 28, 34, 38, 40, 43, 44, 56, 68, 86, 112, 119, 136, 172 \\ \hline
0 & 0 & 7 & 3 & 2, 4, 5, 7, 8, 10, 13, 14, 17, 19, 20, 25, 28, 35, 38, 40, 49, 56, 70, 76, 80, 85, 91, 112, 119, 133, 140, 145, 154, 175 \\ \hline
0 & 0 & 9 & 0 & 2, 4, 7, 8, 13, 14, 19, 26, 28, 37, 38, 40, 52, 56, 65, 70, 73, 74, 76, 80, 91, 95, 104, 112, 122, 124, 130, 133, 140, 146, 148, 152, 182, 185, 190 \\ 
\hline
\end{tabular}
\end{adjustbox}
\vspace{.05 in}
\caption{$n<200$ where 3 is a common index divisor in $\QQ(\hat A[n])$ with $\dim \hat{A}=3$ and $a_5=a_4=0$. }
\label{p=3Dim=3}
\end{table}


\begin{table}[h!]
\centering
\begin{adjustbox}{width=6.1 in,center}
\begin{tabular}{ | m{.5cm} | m{.5cm}| m{.7cm}| m{1.1 cm}| m{14cm} | }
\hline
$a_5$ & $a_4$ & $a_3$ & $p$-rank & non-monogenic $n$ \\
\hline
\hline
0 & 0 & -21 & 3 & 3, 4, 6, 7, 8, 9, 12, 14, 16, 18, 19, 21, 24, 28, 36, 38, 42, 48, 49, 51, 56, 57, 63, 72, 76, 84, 108, 112, 114, 119, 126, 133, 144, 147, 152, 168, 171, 189 \\ \hline
0 & 0 & -20 & 0 & 2, 4, 8, 12, 16, 24, 26, 42, 48, 51, 52, 56, 68, 84, 96, 102, 104, 109, 112, 119, 136, 168 \\ \hline
0 & 0 & -19 & 3 & 4, 7, 8, 11, 14, 16, 19, 26, 28, 29, 31, 37, 38, 44, 52, 56, 74, 76, 91, 104, 112, 133, 148, 152, 163, 182 \\ \hline
0 & 0 & -17 & 3 & 4, 8, 11, 13, 16, 26, 39, 52, 76, 91, 93, 104, 109, 143, 152 \\ \hline
0 & 0 & -16 & 3 & 2, 4, 7, 8, 11, 12, 14, 16, 22, 24, 26, 28, 31, 48, 52, 56, 71, 88, 91, 96, 104, 142, 164, 182 \\ \hline
0 & 0 & -15 & 0 & 3, 4, 6, 8, 9, 12, 16, 18, 24, 36, 37, 48, 72, 74, 108, 111, 144, 148 \\ \hline
0 & 0 & -13 & 3 & 4, 8, 9, 13, 16, 31, 39, 51, 63, 76, 91, 117, 119, 152, 153 \\ \hline
0 & 0 & -12 & 3 & 2, 3, 4, 6, 7, 8, 9, 12, 14, 16, 18, 19, 21, 24, 28, 31, 36, 38, 42, 48, 51, 56, 57, 62, 63, 68, 72, 76, 84, 93, 96, 102, 114, 124, 126, 133, 136, 138, 152, 168, 171, 186, 189 \\ \hline
0 & 0 & -11 & 3 & 4, 8, 16, 19, 38, 61, 76, 152 \\ \hline
0 & 0 & -10 & 0 & 2, 4, 8, 12, 24, 26, 29, 42, 48, 51, 52, 56, 58, 68, 84, 102, 112, 119, 136, 168 \\ \hline
0 & 0 & -9 & 3 & 3, 4, 6, 7, 8, 9, 12, 13, 14, 16, 18, 21, 24, 26, 27, 28, 36, 38, 39, 42, 48, 52, 54, 56, 63, 72, 78, 81, 84, 91, 104, 108, 112, 117, 123, 126, 144, 156, 162, 168, 182, 189 \\ \hline
0 & 0 & -8 & 3 & 2, 4, 8, 11, 12, 16, 22, 24, 31, 42, 48, 56, 62, 84, 88, 93, 96, 112, 124, 134, 168, 186 \\ \hline
0 & 0 & -7 & 3 & 4, 7, 8, 14, 16, 19, 21, 26, 28, 29, 31, 38, 49, 51, 52, 56, 62, 76, 91, 104, 112, 119, 124, 133, 152, 161, 163, 182 \\ \hline
0 & 0 & -6 & 3 & 2, 3, 4, 6, 8, 9, 11, 12, 18, 19, 22, 24, 26, 31, 33, 36, 39, 42, 48, 52, 56, 57, 66, 78, 84, 112, 114, 117, 132, 156, 168, 171 \\ \hline
0 & 0 & -5 & 0 & 4, 7, 8, 9, 11, 14, 16, 28, 43, 51, 56, 93, 112, 153 \\ \hline
0 & 0 & -3 & 3 & 3, 4, 6, 8, 9, 11, 12, 16, 18, 24, 26, 31, 33, 36, 38, 39, 43, 48, 51, 52, 62, 72, 76, 78, 79, 104, 108, 117, 119, 123, 124, 129, 144, 156, 181 \\ \hline
0 & 0 & -2 & 3 & 2, 4, 7, 8, 12, 14, 19, 24, 28, 48, 133, 164 \\ \hline
0 & 0 & -1 & 3 & 4, 8, 16, 29, 31, 62, 124, 127 \\ \hline
0 & 0 & 1 & 3 & 4, 8, 16, 29, 31, 62, 124, 127 \\ \hline
0 & 0 & 2 & 3 & 2, 4, 7, 8, 12, 14, 24, 28, 48, 133, 164 \\ \hline
0 & 0 & 3 & 3 & 3, 4, 6, 8, 9, 11, 12, 16, 18, 24, 26, 33, 36, 39, 43, 48, 51, 52, 72, 78, 104, 108, 117, 119, 123, 129, 144, 156, 181 \\ \hline
0 & 0 & 5 & 0 & 4, 7, 8, 9, 11, 14, 16, 28, 43, 51, 56, 93, 112, 153, 191 \\ \hline
0 & 0 & 6 & 3 & 2, 3, 4, 6, 8, 9, 11, 12, 18, 22, 24, 26, 33, 36, 39, 42, 48, 52, 56, 57, 66, 78, 79, 84, 112, 114, 117, 132, 156, 168, 171 \\ \hline
0 & 0 & 7 & 3 & 4, 7, 8, 14, 16, 19, 21, 26, 28, 29, 38, 49, 51, 52, 56, 76, 91, 104, 112, 119, 133, 152, 161, 163, 182 \\ \hline
0 & 0 & 8 & 3 & 2, 4, 8, 11, 12, 16, 19, 22, 24, 31, 42, 48, 56, 62, 84, 88, 93, 96, 112, 124, 134, 168, 186 \\ \hline
0 & 0 & 9 & 3 & 3, 4, 6, 7, 8, 9, 12, 13, 14, 16, 18, 21, 24, 26, 27, 28, 36, 39, 42, 44, 48, 52, 54, 56, 63, 72, 78, 81, 84, 91, 104, 108, 112, 117, 123, 126, 144, 156, 162, 168, 182, 189 \\ \hline
0 & 0 & 10 & 0 & 2, 4, 8, 12, 24, 26, 29, 42, 48, 51, 52, 56, 58, 68, 84, 102, 112, 119, 136, 168 \\ \hline
0 & 0 & 11 & 3 & 4, 8, 16, 61, 76, 152 \\ \hline
0 & 0 & 12 & 3 & 2, 3, 4, 6, 7, 8, 9, 12, 14, 16, 18, 19, 21, 24, 28, 31, 36, 38, 42, 48, 51, 56, 57, 62, 63, 68, 72, 76, 84, 93, 96, 102, 114, 124, 126, 133, 136, 138, 152, 168, 171, 186, 189 \\ \hline
0 & 0 & 13 & 3 & 4, 8, 9, 13, 16, 19, 31, 38, 39, 51, 62, 63, 76, 91, 117, 119, 124, 152, 153 \\ \hline
0 & 0 & 15 & 0 & 3, 4, 6, 8, 9, 12, 16, 18, 24, 31, 36, 37, 48, 62, 72, 74, 108, 111, 144, 148, 191 \\ \hline
0 & 0 & 16 & 3 & 2, 4, 7, 8, 11, 12, 14, 16, 22, 24, 26, 28, 48, 52, 56, 71, 88, 91, 96, 104, 142, 164, 182 \\ \hline
0 & 0 & 17 & 3 & 4, 8, 11, 13, 16, 19, 26, 38, 39, 44, 52, 76, 79, 91, 93, 104, 109, 143, 152 \\ \hline
0 & 0 & 19 & 3 & 4, 7, 8, 11, 14, 16, 19, 26, 28, 29, 31, 37, 38, 52, 56, 62, 74, 76, 79, 91, 104, 112, 133, 148, 152, 163, 182 \\ \hline
0 & 0 & 20 & 0 & 2, 4, 8, 12, 16, 24, 26, 42, 48, 51, 52, 56, 68, 84, 96, 102, 104, 109, 112, 119, 136, 168 \\ \hline
0 & 0 & 21 & 3 & 3, 4, 6, 7, 8, 9, 12, 14, 16, 18, 21, 24, 28, 36, 42, 48, 49, 51, 56, 57, 63, 72, 76, 84, 108, 112, 114, 119, 126, 133, 144, 147, 152, 168, 171, 189 \\ \hline
\end{tabular}
\end{adjustbox}
\vspace{.05 in}
\caption{$n<200$ where 5 is a common index divisor in $\QQ(\hat A[n])$ with $\dim \hat{A}=3$ and $a_5=a_4=0$. }
\label{p=5Dim=3}
\end{table}


\begin{table}[h!]
\centering
\begin{adjustbox}{width=6.3 in,center}
\begin{tabular}{ | m{.5cm} | m{.5cm}| m{.5cm}| m{.7cm}| m{1.1 cm}| m{12cm} | }
\hline
$a_7$ & $a_6$ & $a_5$ & $a_4$ & $p$-rank & non-monogenic $n$ \\
\hline
\hline
0 & 0 & -3 & 1 & 4 & 3, 9, 27 \\ \hline
0 & 0 & -2 & -2 & 0 & 3 \\ \hline
0 & 0 & -2 & -1 & 4 & 11 \\ \hline
0 & 0 & -2 & 0 & 0 &  \\ \hline
0 & 0 & -2 & 2 & 0 &  \\ \hline
0 & 0 & -2 & 3 & 4 &  \\ \hline
0 & 0 & -1 & -5 & 4 & 3, 29 \\ \hline
0 & 0 & -1 & -4 & 3 &  \\ \hline
0 & 0 & -1 & -3 & 4 &  \\ \hline
0 & 0 & -1 & -1 & 4 &  \\ \hline
0 & 0 & -1 & 1 & 4 & 3 \\ \hline
0 & 0 & -1 & 2 & 3 &  \\ \hline
0 & 0 & -1 & 3 & 4 & 17 \\ \hline
0 & 0 & -1 & 5 & 4 &  \\ \hline
0 & 0 & 0 & -7 & 4 & 3, 5, 7, 9, 11, 13, 15, 17, 19, 21, 23, 25, 31, 33, 39, 47, 51, 53, 55, 57, 61, 63, 85, 93 \\ \hline
0 & 0 & 0 & -6 & 0 & 3, 5, 9, 11, 15, 17, 23, 25, 27, 29, 33, 35, 37, 43, 45, 51, 53, 55, 65, 67, 69, 85, 87, 91, 99 \\ \hline
0 & 0 & 0 & -5 & 4 & 3, 5, 7, 9, 11, 15, 17, 21, 25, 27, 33, 35, 39, 41, 43, 45, 47, 51, 55, 61, 63, 79, 81, 85, 99 \\ \hline
0 & 0 & 0 & -3 & 4 & 3, 5, 7, 9, 13, 15, 17, 21, 27, 31, 35, 39, 41, 43, 49, 51, 63, 83, 89, 91, 93 \\ \hline
0 & 0 & 0 & -2 & 0 & 3, 5, 7, 9, 15, 17, 19, 21, 23, 25, 27, 31, 39, 45, 47, 51, 53, 57, 79, 83, 93 \\ \hline
0 & 0 & 0 & 0 & 0 & 3, 5, 7, 9, 11, 13, 15, 17, 19, 21, 25, 27, 29, 31, 33, 35, 37, 39, 41, 43, 45, 51, 53, 55, 57, 63, 65, 69, 73, 75, 85, 87, 89, 91, 93, 95, 97, 99 \\ \hline
0 & 0 & 0 & 1 & 4 & 3, 5, 9, 11, 13, 15, 17, 23, 25, 27, 29, 31, 33, 35, 37, 39, 43, 45, 51, 53, 55, 57, 65, 75, 85, 87, 91, 95 \\ \hline
0 & 0 & 0 & 2 & 0 & 3, 5, 7, 9, 13, 15, 17, 19, 21, 23, 31, 39, 47, 51, 53, 57, 93 \\ \hline
0 & 0 & 0 & 3 & 4 & 3, 5, 7, 9, 13, 15, 17, 21, 25, 27, 31, 35, 39, 43, 49, 51, 59, 63, 71, 89, 91, 93, 95 \\ \hline
0 & 0 & 0 & 5 & 4 & 3, 5, 7, 9, 11, 15, 17, 21, 25, 27, 29, 33, 35, 41, 43, 45, 47, 51, 55, 59, 61, 63, 71, 81, 83, 85, 99 \\ \hline
0 & 0 & 0 & 6 & 0 & 3, 5, 7, 9, 11, 13, 15, 17, 23, 25, 27, 29, 33, 35, 37, 43, 45, 51, 53, 55, 65, 69, 71, 79, 85, 87, 91, 99 \\ \hline
0 & 0 & 1 & -5 & 4 & 3, 29 \\ \hline
0 & 0 & 1 & -4 & 3 &  \\ \hline
0 & 0 & 1 & -3 & 4 &  \\ \hline
0 & 0 & 1 & -1 & 4 &  \\ \hline
0 & 0 & 1 & 1 & 4 & 3 \\ \hline
0 & 0 & 1 & 2 & 3 &  \\ \hline
0 & 0 & 1 & 3 & 4 & 17 \\ \hline
0 & 0 & 1 & 5 & 4 &  \\ \hline
0 & 0 & 2 & -2 & 0 & 3 \\ \hline
0 & 0 & 2 & -1 & 4 & 11 \\ \hline
0 & 0 & 2 & 0 & 0 &  \\ \hline
0 & 0 & 2 & 2 & 0 &  \\ \hline
0 & 0 & 2 & 3 & 4 &  \\ \hline
\end{tabular}
\end{adjustbox}
\vspace{.05 in}
\caption{$n<100$ where 2 is a common index divisor in $\QQ(\hat A[n])$ with $\dim \hat{A}=4$ and $a_7=a_6=0$ }
\label{p=2Dim=4}
\end{table}


\appendix

\section{The Size of the General Symplectic Group}\label{OrdGSp}


Let $\GSp_{2g}$ be the general symplectic group of dimension $2g$ and $\Sp_{2g}$ be the symplectic group of dimension $2g$. We expect $\QQ(\hat A[n])$ to have degree equal to $|\GSp_{2g}(\ZZ/n\ZZ)|$ over $\QQ$. Thus our algorithm requires $|\GSp_{2g}(\ZZ/n\ZZ)|$. Let $\ell\neq p$ be a rational prime. Note that 
\[|\Sp_{2g}(\ZZ/\ell\ZZ)|=\ell^{g^2}\prod_{i=1}^g\left(\ell^{2i}-1\right).\]
One can consult \cite[page 147]{Artin} for this result. From the exact sequence
\[1\to \Sp_{2g}(\ZZ/\ell\ZZ)\to \GSp_{2g}(\ZZ/\ell\ZZ)\to\GG_m(\ZZ/\ell\ZZ)\to 1,\]
we see that 
\begin{equation*}\label{GSpprime}
|\GSp_{2g}(\ZZ/\ell\ZZ)|=(\ell-1)\ell^{g^2}\prod_{i=1}^g\left(\ell^{2i}-1\right).
\end{equation*}
The Lang-Weil bound \cite{LangWeil} tells us that the dimension of $\GSp_{2g}$ is $2g^2+g+1$. Since $\GSp_{2g}$ is smooth over $\ZZ_\ell$, we have
\[|\GSp_{2g}(\ZZ/\ell^e\ZZ)|=|\GSp_{2g}(\ZZ/\ell\ZZ)|\cdot\left(\ell^{\dim g}\right)^{e-1}. \]
We obtain
\begin{equation*}
|\GSp_{2g}(\ZZ/\ell^e\ZZ)|=\left((\ell-1)\ell^{g^2}\prod_{i=1}^g\left(\ell^{2i}-1\right)\right)\cdot\left(\ell^{2g^2+g+1}\right)^{e-1}.
\end{equation*}

Finally, for composite $n$, we have
\begin{equation}\label{Eq: sizeGSp}
\begin{split}
|\GSp_{2g}(\ZZ/n\ZZ)|&=\prod_{\ell\mid n, \ \ell \text{ prime}}\left|\GSp_{2g}\left(\ZZ/p^{v_\ell(n)}\ZZ\right)\right|\\
&=\prod_{\ell\mid n, \ \ell \text{ prime}}\left((\ell-1)\ell^{g^2}\prod_{i=1}^g\left(\ell^{2i}-1\right)\right)\cdot\left(\ell^{2g^2+g+1}\right)^{v_\ell(n)-1}.
\end{split}
\end{equation}

\bibliography{Bibliography}
\bibliographystyle{alpha} 

\end{document}